\newtheorem{thm}{Theorem}[section]
\newtheorem{prop}{Proposition}[section]
\newtheorem{lemma}{Lemma}[section]
\newtheorem{rem}{Remark}[section]
\newtheorem{defi}{Definition}[section]
\newcommand{\R}{\mathbb{R}}             
\newcommand{\C}{\mathbb{C}}             
\newcommand{\D}{\mathcal{D}}            
\newcommand{\Section}[1]{\section{#1} \setcounter{equation}{0}}
\begin{document}

\title{On the hidden mechanism behind non-uniqueness for the anisotropic Calder\'on problem with data on disjoint sets}
\author{Thierry Daud\'e \footnote{Research supported by the French National Research Projects AARG, No. ANR-12-BS01-012-01, and Iproblems, No. ANR-13-JS01-0006} $^{\,1}$, Niky Kamran \footnote{Research supported by NSERC grant RGPIN 105490-2011} $^{\,2}$ and Francois Nicoleau \footnote{Research supported by the French National Research Project NOSEVOL, No. ANR- 2011 BS0101901} $^{\,3}$\\[12pt]
 $^1$  \small D\'epartement de Math\'ematiques. UMR CNRS 8088, Universit\'e de Cergy-Pontoise, \\
 \small 95302 Cergy-Pontoise, France. \\
\small Email: thierry.daude@u-cergy.fr \\
$^2$ \small Department of Mathematics and Statistics, McGill University,\\ \small  Montreal, QC, H3A 2K6, Canada. \\
\small Email: nkamran@math.mcgill.ca \\
$^3$  \small  Laboratoire de Math\'ematiques Jean Leray, UMR CNRS 6629, \\ \small 2 Rue de la Houssini\`ere BP 92208, F-44322 Nantes Cedex 03. \\
\small Email: francois.nicoleau@math.univ-nantes.fr }





\maketitle


\begin{abstract}

We show that there is generically non-uniqueness for the anisotropic Calder\'on problem at fixed frequency when the Dirichlet and Neumann data are measured on disjoint sets of the boundary of a given domain. More precisely, we first show that given a smooth compact connected Riemannian manifold with boundary $(M,g)$ of dimension $n\geq 3$, there exist in the conformal class of $g$ an infinite number of Riemannian metrics $\tilde{g}$ such that their corresponding DN maps at a fixed frequency coincide when the Dirichlet data $\Gamma_D$ and Neumann data $\Gamma_N$ are measured on disjoint sets and satisfy $\overline{\Gamma_D \cup \Gamma_N} \ne \partial M$. The conformal factors that lead to these non-uniqueness results for the anisotropic Calder\'on problem satisfy a nonlinear elliptic PDE of Yamabe type on the original manifold $(M,g)$ and are associated to a natural but subtle gauge invariance of the anisotropic Calder\'on problem with data on disjoint sets. We then construct a large class of counterexamples to uniqueness in dimension $n\geq 3$ to the anisotropic Calder\'on problem at fixed frequency with data on disjoint sets and \emph{modulo this gauge invariance}. This class consists in cylindrical Riemannian manifolds with boundary having two ends (meaning that the boundary has two connected components), equipped with a suitably chosen warped product metric.


\vspace{0.5cm}

\noindent \textit{Keywords}. Inverse problems, Anisotropic Calder\'on problem, Nonlinear elliptic equations of Yamabe type.


\noindent \textit{2010 Mathematics Subject Classification}. Primaries 81U40, 35P25; Secondary 58J50.

\end{abstract}

\tableofcontents


\Section{Introduction}

\subsection{The anisotropic Calder\'on problem}

The anisotropic Calder\'on problem on smooth compact connected Riemannian manifolds with boundary is a model example of an inverse problem which consists in recovering the physical properties of a medium (like its electrical conductivity) by making only electrical measurements at its boundary. In this paper, we consider the case where the Dirichlet and Neumann data are measured on \emph{disjoint} subsets of the boundary, an inverse problem which is important from a practical point of view and which is still largely open \cite{GT2, IUY2, KS1, KS2, KLO, LO1, LO2}. In order to state our results, we first recall the geometric formulation of the Calder\'on problem due Lee and Uhlmann \cite{LeU}. We refer to the surveys \cite{GT2, KS2, Sa, U1} for the current state of the art on the anisotropic Calder\'on problem and also to \cite{DSFKSU, DSFKLS, GSB, GT1, KS1, LaTU, LaU, LeU} for important contributions to the subject.

Let $(M, g)$ be an $n$ dimensional smooth compact connected Riemannian manifold with smooth boundary $\partial M$. Let us denote by $\Delta_{LB}$ the positive Laplace-Beltrami operator on $(M,g)$. In a local coordinate system $(x^i)_{i = 1,\dots,n}$, the Laplace-Beltrami operator $\Delta_{LB}$ is given by
$$
\Delta_{LB}=  -\Delta_g = -\frac{1}{\sqrt{|g|}} \partial_i \left( \sqrt{|g|} g^{ij} \partial_j \right),
$$
where  $|g| = \det \left(g_{ij}\right)$ is the determinant of the metric tensor $(g_{ij})$, where $\left(g^{ij}\right)$ is the inverse of $(g_{ij})$ and where we use the Einstein summation convention. We recall that the Laplace-Beltrami operator $-\Delta_g$ with Dirichlet boundary conditions is selfadjoint on $L^2(M, dVol_g)$ and has pure point spectrum $\{ \lambda_j\}_{j \geq 1}$ with $0 < \lambda_1 < \lambda_2 \leq \dots \leq \lambda_j \to +\infty$ (see for instance \cite{KKL}).

We consider the Dirichlet problem at a frequency $\lambda \in \R$ on $(M,g)$ such that $\lambda \notin \{ \lambda_j\}_{j \geq 1}$. We are interested thus in the solutions $u$ of
\begin{equation} \label{Eq00}
  \left\{ \begin{array}{cc} -\Delta_g u = \lambda u, & \textrm{on} \ M, \\ u = \psi, & \textrm{on} \ \partial M. \end{array} \right.
\end{equation}
It is well known (see for instance \cite{Sa, Ta1}) that for any $\psi \in H^{1/2}(\partial M)$, there exists a unique weak solution $u \in H^1(M)$ of (\ref{Eq00}). This allows us to define the Dirichlet-to-Neumann (DN) map as the operator $\Lambda_{g}(\lambda)$ from $H^{1/2}(\partial M)$ to $H^{-1/2}(\partial M)$ defined for all $\psi \in H^{1/2}(\partial M)$ by
\begin{equation} \label{DN-Abstract}
  \Lambda_{g}(\lambda) (\psi) = \left( \partial_\nu u \right)_{|\partial M},
\end{equation}
where $u$ is the unique solution of (\ref{Eq00}) and $\left( \partial_\nu u \right)_{|\partial M}$ is its normal derivative with respect to the unit outer normal vector $\nu$ on $\partial M$. Here $\left( \partial_\nu u \right)_{|\partial M}$ is interpreted in the weak sense as an element of $H^{-1/2}(\partial M)$ by
$$
  \left\langle \Lambda_{g}(\lambda) \psi | \phi \right \rangle = \int_M \langle du, dv \rangle_g \, dVol_g,
$$
for any $\psi \in H^{1/2}(\partial M)$ and $\phi \in H^{1/2}(\partial M)$ such that $u$ is the unique solution of (\ref{Eq00}) and $v$ is any element of $H^1(M)$ such that $v_{|\partial M} = \phi$. If $\psi$ is sufficiently smooth, we can check that
$$
  \Lambda_{g}(\lambda) \psi = g(\nu, \nabla u)_{|\partial M} = du(\nu)_{|\partial M} = \nu(u)_{|\partial M},
$$
where $\nu$ represents the unit outer normal vector to $\partial M$, so that an expression in local coordinates for the normal derivative is thus given by
\begin{equation} \label{DN-Coord}
\partial_\nu u = \nu^i \partial_i u.
\end{equation}

We shall be interested in the \emph{partial} DN maps defined as follows. Let $\Gamma_D$ and $\Gamma_N$ be two open subsets of $\partial M$. We define the partial DN map $\Lambda_{g,\Gamma_D,\Gamma_N}(\lambda)$ as the restriction of the global DN map $\Lambda_g(\lambda)$ to Dirichlet data given on $\Gamma_D$ and Neumann data measured on $\Gamma_N$. Precisely, consider the Dirichlet problem
\begin{equation} \label{Eq0}
  \left\{ \begin{array}{cc} -\Delta_g u = \lambda u, & \textrm{on} \ M, \\ u = \psi, & \textrm{on} \ \Gamma_D, \\ u = 0, & \textrm{on} \ \partial M \setminus \Gamma_D. \end{array} \right.
\end{equation}
We define $\Lambda_{g,\Gamma_D,\Gamma_N}(\lambda)$ as the operator acting on the functions $\psi \in H^{1/2}(\partial M)$ with $\textrm{supp}\,\psi \subset \Gamma_D$ by
\begin{equation} \label{Partial-DNmap}
  \Lambda_{g,\Gamma_D,\Gamma_N}(\lambda) (\psi) = \left( \partial_\nu u \right)_{|\Gamma_N},
\end{equation}
where $u$ is the unique solution of (\ref{Eq0}).

In its simplest form, the anisotropic partial Calder\'on problem can be stated as follows: \emph{Does the knowledge of the partial DN map $\Lambda_{g,\Gamma_D, \Gamma_N}(\lambda)$ at a fixed frequency $\lambda$ determine uniquely the metric $g$}?

The answer to the above question is negative because of a number of natural gauge invariances that are inherent to the problem. Indeed, it follows from the definition (\ref{Eq0}) - (\ref{Partial-DNmap}) that in any dimension, the partial DN map $\Lambda_{g, \Gamma_D, \Gamma_N}(\lambda)$ is invariant under pullback of the metric by the diffeomorphisms of $M$ that restrict to the identity on $\Gamma_D \cup \Gamma_N$, \textit{i.e.}
\begin{equation} \label{Inv-Diff}
  \forall \phi \in \textrm{Diff}(M) \ \textrm{such that} \ \phi_{|\Gamma_D \cup \Gamma_N} = Id, \quad \Lambda_{\phi^*g, \Gamma_D, \Gamma_N}(\lambda) = \Lambda_{g, \Gamma_D, \Gamma_N}(\lambda).
\end{equation}

In the two dimensional case and for zero frequency $\lambda = 0$, there is an additional gauge invariance of the DN map due to the fact that the Laplace-Beltrami operator is acted on by scalings under conformal changes of the metric.  More precisely, recall that if $\dim M=2$, then
$$
\Delta_{cg} = \frac{1}{c} \Delta_g,
$$
for any smooth function $c >0$. Therefore, we have in dimension $2$
\begin{equation} \label{Inv-Conf}
\forall c \in C^\infty(M) \ \textrm{such that} \ c >0 \ \textrm{and} \ c_{|\Gamma_N} = 1, \quad \Lambda_{c g, \Gamma_D, \Gamma_N}(0) = \Lambda_{g, \Gamma_D, \Gamma_N}(0),
\end{equation}
since the unit outer normal vectors $\nu_{cg}$ and $\nu_g$ coincide on $\Gamma_N$ in that case.

It therefore follows that the appropriate question to address (called the \emph{anisotropic Calder\'on conjecture}) is the following. \\

\noindent \textbf{(Q1)}: \emph{Let $M$ be a smooth compact connected manifold with smooth boundary $\partial M$ and let $g,\, \tilde{g}$ be smooth Riemannian metrics on $M$. Let $\Gamma_D, \Gamma_N$ be any open subsets of $\partial M$ and assume that $\lambda \in \R$ does not belong to $\sigma(-\Delta_g) \cup \sigma(-\Delta_{\tilde{g}})$. If
$$
  \Lambda_{g,\Gamma_D, \Gamma_N}(\lambda) = \Lambda_{\tilde{g},\Gamma_D, \Gamma_N}(\lambda),
$$
is it true that
$$
  g = \tilde{g},
$$
up to the gauge invariance (\ref{Inv-Diff}) if $\dim M \geq 3$ and up to the gauge invariances (\ref{Inv-Diff}) - (\ref{Inv-Conf}) if $\dim M = 2$ and $\lambda = 0$}? \\

There are three subcases of the above problem which are of particular interest:
\begin{itemize}
\item \textbf{Full data}: $\Gamma_D = \Gamma_N = \partial M$. In that case, we denote the DN map simply by $\Lambda_g(\lambda)$.

\item \textbf{Local data}: $\Gamma_D = \Gamma_N = \Gamma$, where $\Gamma$ can be any nonempty open subset of $\partial M$. In that case, we denote the DN map by $\Lambda_{g, \Gamma}(\lambda)$.

\item \textbf{Data on disjoint sets}: $\Gamma_D$ and $\Gamma_N$ are disjoint open sets of $\partial M$.
\end{itemize}

If $\dim M \geq 3$, one may also consider a simpler inverse problem by assuming that the Riemannian manifolds $(M,g)$ and $(M,\tilde{g})$ belong to the same conformal class, that is $\tilde{g} = c g$ for some smooth strictly positive function c. In that case, $g$ is considered as a given known background metric and the problem consists in determining the unknown scalar function $c$ from the DN map $\Lambda_{c g,\Gamma_D, \Gamma_N}(\lambda)$. In that case, the anisotropic Calder\'on problem becomes: \\

\noindent \textbf{(Q2)}: \emph{Let $(M,g)$ be a smooth compact connected Riemannian manifold of dimension $n\geq 3$ with smooth boundary $\partial M$ and let $\Gamma_D, \Gamma_N$ be open subsets of $\partial M$. Let $c$ be a smooth strictly positive function on $M$ and assume that $\lambda \in \R$ does not belong to $\sigma(-\Delta_g) \cup \sigma(-\Delta_{c g})$. If
$$
  \Lambda_{c g,\Gamma_D, \Gamma_N}(\lambda) = \Lambda_{g,\Gamma_D, \Gamma_N}(\lambda),
$$
does there exist a diffeomorphism $\phi: \, M \longrightarrow M$ with $\phi_{| \, \Gamma_D \cup \Gamma_N} = Id$ such that}
\begin{equation} \label{Inv-Conformal}
  \phi^* g = c g?
\end{equation}
Since any diffeomorphism $\phi: \, M \longrightarrow M$  which satisfies $\phi^* g = c g$ and $\phi_{|\Gamma} = Id$ for a non-empty open subset $\Gamma$ of $\partial M$ must be the identity \cite{Li}\footnote{Although Proposition 3.3 in \cite{Li} has been stated in the case $\Gamma = \partial M$, the result remains true when $\Gamma$ is replaced by any non-empty open subset of $\partial M$ }, we see that there is no ambiguity arising from diffeomorphisms in the solution of the anisotropic Calder\'on problem \textbf{(Q2)}. The condition (\ref{Inv-Conformal}) may therefore be replaced by the condition
\begin{equation} \label{Inv-Conformal-1}
  c = 1, \quad \textrm{on} \ M.
\end{equation}

A third version of the anisotropic Calder\'on problem which is somewhat related to \textbf{(Q2)}, but involves now an external potential, is given by the following. Consider the solution of the Schr\"odinger equation on $(M,g)$ with potential $V \in L^\infty(M)$
\begin{equation} \label{Eq0-Schrodinger}
  \left\{ \begin{array}{cc} (-\Delta_g + V) u = \lambda u, & \textrm{on} \ M, \\ u = \psi, & \textrm{on} \ \Gamma_D, \\ u = 0, & \textrm{on} \ \partial M \setminus \Gamma_D. \end{array} \right.
\end{equation}
It is well known (see for example \cite{DSFKSU, Sa}) that if $\lambda$ does not belong to the Dirichlet spectrum of $-\Delta_g +V$, then for any $\psi \in H^{1/2}(\partial M)$, there exists a unique weak solution $u \in H^1(M)$ of (\ref{Eq0-Schrodinger}). This allows us to define the partial Dirichlet-to-Neumann map $\Lambda_{g, V, \,\Gamma_D, \Gamma_N}(\lambda)$ for all $\psi \in H^{1/2}(\partial M)$ with supp $\psi \subset \Gamma_D$ by
\begin{equation} \label{DN-Abstract-Schrodinger}
  \Lambda_{g, V,\Gamma_D, \Gamma_N}(\lambda) (\psi) = \left( \partial_\nu u \right)_{|\Gamma_N},
\end{equation}
where $u$ is the unique solution of (\ref{Eq0-Schrodinger}) and $\left( \partial_\nu u \right)_{|\Gamma_N}$ is its normal derivative with respect to the unit outer normal vector $\nu$ on $\Gamma_N$. We assume again here that $g$ is a given background metric and the problem consists in determining the unknown potential $V \in L^\infty(M)$ from the DN map $\Lambda_{g, V, \,\Gamma_D, \Gamma_N}(\lambda)$. Precisely, the question is: \\

\noindent \textbf{(Q3)}: \emph{Let $(M,g)$ be a smooth compact connected Riemannian manifold with smooth boundary $\partial M$ and let $\Gamma_D, \Gamma_N$ be open subsets of $\partial M$. Let $V_1$ and $V_2$ be potentials in $L^\infty(M)$ and assume that  $\lambda \in \R$ does not belong to the Dirichlet spectra of $-\triangle_g + V_1$ and $-\triangle_g + V_2$. If
$$
  \Lambda_{g, V_1, \Gamma_D, \Gamma_N}(\lambda) = \Lambda_{g, V_2, \Gamma_D, \Gamma_N}(\lambda),
$$
is it true that}
$$
  V_1 = V_2?
$$

If $\dim M \geq 3$, there is a straightforward link between \textbf{(Q2)} and \textbf{(Q3)} that is based on the transformation law for the Laplace-Beltrami operator under conformal changes of metric,
\begin{equation} \label{ConformalScaling}
  -\Delta_{c^4 g} u = c^{-(n+2)} \left( -\Delta_g + q_{g,c} \right) \left( c^{n-2} u \right),
\end{equation}
where
\begin{equation} \label{q}
  q_{g,c} = c^{-n+2} \Delta_{g} c^{n-2}.
\end{equation}	
We have:

\begin{prop} \label{Link-c-to-V}
Let $\lambda \in \R$ be fixed. Assume that $c$ is a smooth strictly positive function on $M$ such that $c = 1$ on $\Gamma_D \cup \Gamma_N$. \\
1. If $\Gamma_D \cap \Gamma_N = \emptyset$, then
\begin{equation} \label{Link}
	\Lambda_{c^4 g, \Gamma_D, \Gamma_N}(\lambda) = \Lambda_{g, V_{g,c,\lambda}, \Gamma_D,\Gamma_N}(\lambda),
\end{equation}
where
\begin{equation} \label{Vgc}
  V_{g,c,\lambda} = q_{g,c} + \lambda(1-c^4), \quad q_{g,c} = c^{-n+2} \Delta_{g} c^{n-2}.
\end{equation}	
2. If $\Gamma_D \cap \Gamma_N \ne \emptyset$ and $\partial_{\nu} c = 0$ on $\Gamma_N$, then (\ref{Link}) also holds.
\end{prop}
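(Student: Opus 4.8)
The plan is to read off the identity (\ref{Link}) directly from the conformal scaling law (\ref{ConformalScaling}), by setting up an explicit correspondence between the solutions of the two boundary value problems. Write $\tilde{g} = c^4 g$ and let $u$ be the unique solution of the Dirichlet problem (\ref{Eq0}) for the metric $\tilde{g}$ at frequency $\lambda$, with datum $\psi$ supported in $\Gamma_D$. Substituting $-\Delta_{\tilde{g}} u = \lambda u$ into (\ref{ConformalScaling}) and clearing the factor $c^{-(n+2)}$ gives $(-\Delta_g + q_{g,c})(c^{n-2} u) = \lambda c^4 (c^{n-2} u)$. Setting $v = c^{n-2} u$ and splitting $\lambda c^4 v = \lambda v + \lambda(c^4 - 1) v$, this is exactly $(-\Delta_g + V_{g,c,\lambda}) v = \lambda v$ with $V_{g,c,\lambda}$ as in (\ref{Vgc}).

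I would then verify that $v$ inherits the Dirichlet data of $u$. Since $c = 1$ on $\Gamma_D$ and $\textrm{supp}\,\psi \subset \Gamma_D$, we get $v = c^{n-2} u = \psi$ on $\Gamma_D$; and since $u = 0$ on $\partial M \setminus \Gamma_D$, the factor $c^{n-2}$ is irrelevant there and $v = 0$ on $\partial M \setminus \Gamma_D$. Hence $v$ is precisely the solution of the Schr\"odinger Dirichlet problem (\ref{Eq0-Schrodinger}) with potential $V_{g,c,\lambda}$ and datum $\psi$. Running the computation backwards shows that $u \mapsto c^{n-2} u$ is a bijection between the two solution spaces, so $\lambda$ lies in the Dirichlet spectrum of $-\Delta_{c^4 g}$ if and only if it lies in that of $-\Delta_g + V_{g,c,\lambda}$, and the two DN maps are simultaneously well-defined.

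The heart of the matter is the comparison of the two Neumann traces on $\Gamma_N$. As $\tilde{g} = c^4 g$ is conformal to $g$, the outer normal directions agree and the unit normals are related by $\nu_{\tilde{g}} = c^{-2}\nu_g$; thus $\partial_\nu^{\tilde{g}} u = c^{-2} \partial_\nu^g u$, which equals $\partial_\nu^g u$ on $\Gamma_N$ because $c = 1$ there. Differentiating $v = c^{n-2} u$ along $\nu_g$ and again using $c = 1$ on $\Gamma_N$ yields $\partial_\nu^g v = \partial_\nu^g u + (n-2)(\partial_\nu^g c)\, u$ on $\Gamma_N$. Establishing (\ref{Link}) therefore reduces to showing that the spurious term $(n-2)(\partial_\nu^g c)\, u$ vanishes on $\Gamma_N$, and this is where the two sets of hypotheses play their role.

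I expect this to be the only genuine difficulty, and it splits into the two stated cases. In case 1, the disjointness $\Gamma_D \cap \Gamma_N = \emptyset$ forces $\Gamma_N \subset \partial M \setminus \Gamma_D$, so $u = 0$ on $\Gamma_N$ and the extra term drops out irrespective of the behaviour of $c$ there. In case 2, where $\Gamma_N$ may meet $\Gamma_D$ and $u$ need not vanish on $\Gamma_N$, the hypothesis $\partial_\nu c = 0$ on $\Gamma_N$ is imposed precisely to annihilate the factor $\partial_\nu^g c$. In both cases one obtains $\partial_\nu^g v = \partial_\nu^g u = \partial_\nu^{\tilde{g}} u$ on $\Gamma_N$; evaluating the two DN maps on $\psi$ then gives (\ref{Link}).
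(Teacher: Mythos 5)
Your proposal is correct and follows essentially the same route as the paper: the substitution $v=c^{n-2}u$ via the conformal scaling law (\ref{ConformalScaling}), the observation that the normals agree on $\Gamma_N$ because $c=1$ there, and the identification of the extra term $(n-2)(\partial_\nu c)\,u$ on $\Gamma_N$ as the only obstruction, killed by $u=0$ on $\Gamma_N$ in case 1 and by $\partial_\nu c=0$ in case 2. The added remark that $u\mapsto c^{n-2}u$ is a bijection of solution spaces (so the two DN maps are simultaneously well defined) is a small, correct refinement the paper leaves implicit.
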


\begin{proof}

Given a function $c$ satisfying the assumptions of the Proposition, consider the Dirichlet problem at fixed frequency $\lambda$ associated to the metric $c^4 g$, \textit{i.e.}
\begin{equation} \label{z1}
  \left\{ \begin{array}{cc} -\Delta_{c^4 g} u = \lambda u, & \textrm{on} \ M, \\ u = \psi, & \textrm{on} \ \Gamma_D, \\ u = 0, & \textrm{on} \ \partial M \setminus \Gamma_D. \end{array} \right.
\end{equation}
Using (\ref{ConformalScaling}) and setting $v = c^{n-2} u$, the Dirichlet problem (\ref{z1}) is equivalent to
\begin{equation} \label{z3}
  \left\{ \begin{array}{cc} (-\Delta_{g} + q_{g,c} + \lambda (1-c^4)) v = \lambda v, & \textrm{on} \ M, \\ v = c^{n-2} \psi, & \textrm{on} \ \Gamma_D, \\ v = 0, & \textrm{on} \ \partial M \setminus \Gamma_D. \end{array} \right.
\end{equation}
Since $c = 1$ on $\Gamma_D$, we see that the function $v$ satisfies
\begin{equation} \label{z4}
  \left\{ \begin{array}{cc} (-\Delta_{g} + V_{g,c,\lambda}) v = \lambda v, & \textrm{on} \ M, \\ v = \psi, & \textrm{on} \ \Gamma_D, \\ v = 0, & \textrm{on} \ \partial M \setminus \Gamma_D. \end{array} \right.
\end{equation}
where $V_{g,c,\lambda}$ is given by (\ref{Vgc}). In other words, $v$ is the unique solution of the Dirichlet problem (\ref{z4}) at frequency $\lambda$ associated to the Schr\"odinger operator $-\triangle_g + V_{g,c,\lambda}$.

Let us show now that $\Lambda_{c^4 g,\Gamma_D, \Gamma_N}(\lambda) = \Lambda_{g, V_{g,c,\lambda}, \Gamma_D, \Gamma_N}(\lambda)$ in the different cases stated in the Proposition.

On one hand, since the conformal factor $c$ satisfies $c = 1$ on $\Gamma_N$, the unit outgoing normal vector $\tilde{\nu}$ associated to $\tilde{g} = c^4 g$  is equal to the unit  outgoing normal vector $\nu$ associated to $g$ on $\Gamma_N$. Thus by definition of the partial DN map, we have
\begin{equation} \label{z5}
  \Lambda_{c^4 g,\Gamma_D,\Gamma_N}(\lambda) \psi = (\partial_\nu u)_{|\Gamma_N},
\end{equation}
where $u$ is the unique solution of (\ref{z1}). On the other hand, since $v = c^{n-2} u$ is the unique solution of (\ref{z4}), we have
$$
  \Lambda_{g, V_{g,c,\lambda},\Gamma_D,\Gamma_N}(\lambda) \psi = (\partial_\nu v)_{|\Gamma_N} = \big((\partial_\nu c^{n-2}) u + c^{n-2} \partial_\nu u \big)_{|\Gamma_N}.
$$
Since $c = 1$ and $u = \psi$ on $\Gamma_N$, we thus obtain
\begin{equation} \label{z6}
  \Lambda_{g,V_{g,c,\lambda},\Gamma_D,\Gamma_N}(\lambda) \psi = \big((\partial_\nu c^{n-2}) \psi + \partial_\nu u \big)_{|\Gamma_N}.
\end{equation}
If $\Gamma_D \cap \Gamma_N = \emptyset$, which is Case 1 in our Proposition, we have $\psi = 0$ on $\Gamma_N$. Hence we obtain
\begin{equation} \label{z7}
  \Lambda_{g,V_{g,c,\lambda},\Gamma_D,\Gamma_N}(\lambda) \psi = \big(\partial_\nu u \big)_{|\Gamma_N} = \Lambda_{c^4 g,\Gamma_D,\Gamma_N}(\lambda) \psi.
\end{equation}
If $\Gamma_D \cap \Gamma_N \ne \emptyset$ and $\partial_\nu c = 0$ on $\Gamma_N$, which is Case 2, we also get
\begin{equation} \label{z8}
  \Lambda_{g,V_{g,c,\lambda},\Gamma_D,\Gamma_N}(\lambda) \psi = \big(\partial_\nu u \big)_{|\Gamma_N} = \Lambda_{c^4 g,\Gamma_D,\Gamma_N}(\lambda) \psi.
\end{equation}
\end{proof}

Proposition \ref{Link-c-to-V} gives a clear link between the anisotropic Calder\'on problems \textbf{(Q2)} and \textbf{(Q3)}. As an application and by way of a conclusion for this sub-section, let us show for instance how \textbf{(Q3)} implies \textbf{(Q2)} in the case of local data, \textit{i.e.} $\Gamma_D = \Gamma_N = \Gamma$ any open subset in $\partial M$.

\begin{prop} \label{Q3-to-Q2}
  If $\Gamma_D = \Gamma_N = \Gamma$ is any open set in $\partial M$ and $\lambda \in \R$, then \textbf{(Q3)} implies \textbf{(Q2)}.
\end{prop}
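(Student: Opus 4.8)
The plan is to use Proposition \ref{Link-c-to-V} to convert the conformal problem \textbf{(Q2)} into the potential problem \textbf{(Q3)}, and then to remove the resulting nonlinear ambiguity by a boundary determination argument followed by unique continuation. So I would assume the hypothesis of \textbf{(Q2)} with $\Gamma_D = \Gamma_N = \Gamma$, that is $\Lambda_{c g, \Gamma, \Gamma}(\lambda) = \Lambda_{g, \Gamma, \Gamma}(\lambda)$ for some smooth $c > 0$, and write $c = \rho^4$ with $\rho = c^{1/4} > 0$. Since $\Gamma_D \cap \Gamma_N = \Gamma \neq \emptyset$, applying Proposition \ref{Link-c-to-V} requires being in its Case 2, i.e. knowing in advance that $\rho = 1$ and $\partial_\nu \rho = 0$ on $\Gamma$. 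This is exactly the information that the \emph{local} character of the data supplies.

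First I would carry out a boundary determination step. The equality of the two local DN maps on functions supported in $\Gamma$ forces the full symbols of the associated pseudodifferential operators to agree over $\Gamma$, which determines the entire boundary jet of the conformal factor along $\Gamma$; in particular $c = 1$ and $\partial_\nu c = 0$ on $\Gamma$, hence $\rho = 1$ and $\partial_\nu \rho = 0$ on $\Gamma$. This is the one place where the hypothesis $\Gamma_D = \Gamma_N$ is essential: the highly oscillatory boundary solutions used to read off the jet must have their Dirichlet trace imposed and their Neumann trace measured at the same boundary point. With these two boundary conditions in hand, Case 2 of Proposition \ref{Link-c-to-V} applies and yields $\Lambda_{c g, \Gamma, \Gamma}(\lambda) = \Lambda_{g, V_{g,\rho,\lambda}, \Gamma, \Gamma}(\lambda)$ with $V_{g,\rho,\lambda}$ as in (\ref{Vgc}). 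Combined with the hypothesis and the trivial identity $\Lambda_{g,\Gamma,\Gamma}(\lambda) = \Lambda_{g, 0, \Gamma, \Gamma}(\lambda)$, this gives $\Lambda_{g, V_{g,\rho,\lambda}, \Gamma, \Gamma}(\lambda) = \Lambda_{g, 0, \Gamma, \Gamma}(\lambda)$.

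Before invoking \textbf{(Q3)} I would check its spectral hypothesis: the substitution $v = \rho^{n-2} u$ in (\ref{ConformalScaling}) sets up a bijection between Dirichlet solutions of $(-\Delta_g + V_{g,\rho,\lambda}) v = \lambda v$ and of $-\Delta_{c g} u = \lambda u$, so $\lambda \notin \sigma(-\Delta_{cg})$ (part of the \textbf{(Q2)} hypothesis) gives $\lambda \notin \sigma(-\Delta_g + V_{g,\rho,\lambda})$, while $\lambda \notin \sigma(-\Delta_g)$ is assumed. A positive answer to \textbf{(Q3)} then forces $V_{g,\rho,\lambda} = 0$, i.e. $\rho$ solves the nonlinear elliptic (Yamabe-type) equation $q_{g,\rho} + \lambda(1 - \rho^4) = 0$ on $M$, equivalently $\Delta_g w + \lambda w = \lambda w^{(n+2)/(n-2)}$ for $w = \rho^{n-2}$.

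It remains to conclude that $\rho \equiv 1$, and this is where I expect the main obstacle to lie. The constant $w \equiv 1$ is also a solution of this equation and, by the boundary determination step, shares the same Cauchy data with $w$ on $\Gamma$, namely value $1$ and vanishing normal derivative. Subtracting the two equations and writing the nonlinearity as $w^{(n+2)/(n-2)} - 1 = a(x)(w-1)$ with $a \in L^\infty(M)$, the difference $h = w - 1$ satisfies a \emph{linear} second order elliptic equation $\Delta_g h + (\lambda - \lambda a) h = 0$ with smooth coefficients and zero Cauchy data on the open set $\Gamma \subset \partial M$. Extending $M$ slightly across $\Gamma$ and setting $h = 0$ on the added collar produces an $H^2$ solution of an elliptic equation that vanishes on an open set, so the weak unique continuation principle, together with the connectedness of $M$, gives $h \equiv 0$. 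Hence $\rho \equiv 1$, that is $c \equiv 1$, which by (\ref{Inv-Conformal-1}) is precisely the conclusion of \textbf{(Q2)}. The delicate point throughout is that both a complete boundary jet on $\Gamma$ and the ensuing unique continuation are available only because the Dirichlet and Neumann data are measured on the \emph{same} set $\Gamma$; this is exactly the ingredient that fails for data on disjoint sets, where nontrivial solutions of the Yamabe-type equation survive as genuine counterexamples.
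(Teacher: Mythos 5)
Your proposal is correct and follows essentially the same route as the paper's proof: local boundary determination to get $c=1$ and $\partial_\nu c=0$ on $\Gamma$, reduction via Case 2 of Proposition \ref{Link-c-to-V} to the potential problem, invocation of \textbf{(Q3)} to obtain the Yamabe-type equation $V_{g,c,\lambda}=0$, and unique continuation from Cauchy data on $\Gamma$ to conclude $c\equiv 1$. The only differences are cosmetic refinements on your part (the explicit check of the spectral hypothesis for \textbf{(Q3)}, and the linearization $w^{(n+2)/(n-2)}-1=a(x)(w-1)$ that makes the unique continuation step fully explicit), which the paper leaves implicit.
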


\begin{proof}
Assume that \textbf{(Q3)} holds and assume that for two metrics $g$ and $c^4g$, we have
\begin{equation} \label{u1}
  \Lambda_{c^4 g, \Gamma}(\lambda) = \Lambda_{g, \Gamma}(\lambda),
\end{equation}
where $\Lambda_{c^4 g, \Gamma}(\lambda)$ stands for $\Lambda_{c^4 g, \Gamma, \Gamma}(\lambda)$. Then by local boundary determination (\cite{DSFKSU, KY, LeU}, we can conclude that $c_{|\Gamma} = 1$ and $\left( \partial_{\nu} c \right)_{|\Gamma} = 0$. Hence, we can use (\ref{Link}) to show that (\ref{u1}) is equivalent to (with the previously defined notations)
 \begin{equation} \label{u2}
  \Lambda_{g, V_{g,c,\lambda}, \Gamma}(\lambda) = \Lambda_{g, 0, \Gamma}(\lambda),
\end{equation}
with $V_{g,c,\lambda}$ given by (\ref{Vgc}). Finally, our hypothesis that \textbf{(Q3)} holds true now implies that $V_{g,c,\lambda} = 0$, or in other words that
$$
  \Delta_g c^{n-2} + \lambda (1-c^4) c^{n-2} = 0.
$$	
Since $c^{n-2}_{|\Gamma} = 1$, $\left( \partial_{\nu} c^{n-2} \right)_{|\Gamma} = 0$ and $c$ is bounded, unique continuation principle for 2nd order elliptic PDE on a smooth manifold with smooth boundary (see \cite{Ho}, Section 28 or \cite{Ta}, Theorem 4) shows that $c = 1$ on $M$ and \textbf{(Q2)} is proved.
\end{proof}

\subsection{A brief survey of known results on the anisotropic Calder\'on problem}

The most comprehensive results known on the anisotropic Calder\'on problems \textbf{(Q1)}, \textbf{(Q2)} and \textbf{(Q3)} pertain to the case of \emph{zero frequency}, that is $\lambda = 0$, under the hypotheses of full data ($\Gamma_D = \Gamma_N = \partial M$) or local data ($\Gamma_D = \Gamma_N = \Gamma$ with $\Gamma$ any open subset of $M$). In dimension $2$, the anisotropic Calder\'on problem \textbf{(Q1)} for global and local data with $\lambda = 0$ has been given a positive answer for compact connected Riemannian surfaces in \cite{LaU, LeU}. We also refer to \cite{ALP} for similar results answering \textbf{(Q1)} for global and local data in the case of anisotropic conductivities which are only $L^\infty$ on bounded domains of $\R^n$.

A positive answer to \textbf{(Q1)} for global and local data and zero frequency $\lambda = 0$ in dimension $3$ or higher has been given for compact connected real analytic Riemannian manifolds with real analytic boundary, satisfying certain topological assumptions, in \cite{LeU}. These assumptions were later weakened in \cite{LaU, LaTU}. Similarly, \textbf{(Q1)} has been answered positively for compact connected Einstein manifolds with boundary in \cite{GSB}.

The general anisotropic Calder\'on problem \textbf{(Q1)} in dimension $n\geq 3$  full or local data is still a major open problem. Some important results on the special cases covered by questions \textbf{(Q2)} and \textbf{(Q3)} have been obtained recently in \cite{DSFKSU, DSFKLS, KS1} for classes of smooth compact connected Riemannian manifolds with boundary that are called \emph{admissible}. Such manifolds $(M,g)$ are \emph{conformally transversally anisotropic}, meaning that
$$
  M \subset \subset \R \times M_0, \quad g = c ( e \oplus g_0),
$$
where $(M_0,g_0)$ is a $n-1$ dimensional smooth compact connected Riemannian manifold with boundary, $e$ is the Euclidean metric on the real line and $c$ is a smooth strictly positive function in the cylinder $\R \times M_0$. Furthermore the transverse manifold $(M_0, g_0)$ is assumed to be \emph{simple}\footnote{A compact manifold $(M_0,g_0)$ is said to be simple if any two points in $M_0$ can be connected by a unique geodesic depending smoothly on the endpoints, and if $\partial M_0$ is strictly convex as a submanifold of $(M,g) = c ( e \oplus g_0)$, meaning that its second fundamental form is positive definite.}. It has been shown in \cite{DSFKSU, DSFKLS} that for admissible manifolds, the conformal factor $c$ is uniquely determined from the knowledge of the DN map at zero frequency $\lambda = 0$, so that both \textbf{(Q2)} and \textbf{(Q3)} have positive answers in this context. These results have been further extended to the case of partial data in \cite{KS1} (see below). We also refer to \cite{GT1, Is, IUY1} for additional results in the  case of local data and to the surveys \cite{GT2, KS2} for further references.

There are also positive results  for problem \textbf{(Q3)} in the case of bounded domains $\Omega$ of $\R^n, \ n \geq 3$ equipped with the Euclidean metric, for data measured on distinct subsets $\Gamma_D, \Gamma_N$ of $\partial M$ which are not assumed to be disjoint, \cite{KSU}. The requirement here is that the sets $\Gamma_D, \Gamma_N$ where the measurements are made must overlap, in the sense that $\Gamma_D \subset \partial \Omega$ can possibly have very small measure, in which case $\Gamma_N$ must have slightly larger measure than $\partial \Omega \setminus \Gamma_D$. These results have been generalized in \cite{KS1} to the case of admissible Riemannian manifolds, where use is made of the fact that admissible manifolds admit \emph{limiting Carleman weights}\footnote{We refer to \cite{DSFKSU} for the definition and properties of limiting Carleman weights on manifolds and their applications.} $\varphi$. Thanks to the existence of $\varphi$, we can decompose the boundary of $M$ as
$$
  \partial M = \partial M_+ \cup \partial M_{\textrm{tan}} \cup \partial M_-,
$$
where
$$
  \partial M_\pm = \{ x \in \partial M: \ \pm \partial_\nu \varphi(x) > 0 \}, \quad \partial M_{\textrm{tan}} = \{ x \in \partial M: \  \partial_\nu \varphi(x) = 0 \}.
$$
In essence, the authors of \cite{KS1} show that the answer to \textbf{(Q3)} is positive\footnote{In fact, additional geometric assumptions on the transverse manifold $(M_0,g_0)$ are needed to give a full proof of this result. We refer to \cite{KS1} Theorem 2.1 for the precise statement.} if the set of Dirichlet data $\Gamma_D$ contains $\partial M_- \cup \Gamma_a$ and the set of Neumann measurements $\Gamma_N$ contains $\partial M_+ \cup \Gamma_a$ where $\Gamma_a$ is some open subset of $\partial M_{\textrm{tan}}$. Hence in particular, the sets $\Gamma_D$ and $\Gamma_N$ must overlap in order to have uniqueness. The only exception occurs in the case where $\partial M_{\textrm{tan}}$ has zero measure, in which case it is enough to take $\Gamma_D = \partial M_-$ and $\Gamma_N = \partial M_+$ to have uniqueness in \textbf{(Q3)} (see Theorem 2.3 of \cite{KS1}). Note in this case that $\Gamma_D \cap \Gamma_N = \partial M_- \cap \partial M_+ = \emptyset$.

Only a few results are known in the case  of data measured on \emph{disjoint sets}, and these apply to the case of zero frequency $\lambda = 0$. Besides the paper \cite{KS1} which concerns a certain subclass of admissible Riemannian manifolds, the only other result we are aware is due to Imanuvilov, Uhlmann and Yamamoto \cite{IUY2} which applies to the $2$-dimensional case, and concerns the potential of a Schr\"odinger equation on a two-dimensional domain homeomorphic to a disc. It is shown that when the boundary is partitioned into eight clockwise-ordered arcs $\Gamma_1, \Gamma_2, \dots, \Gamma_8$, then the potential is determined by boundary measurements with sources supported on $S = \Gamma_2 \cup \Gamma_6$ and fields observed on $R = \Gamma_4 \cup \Gamma_8$, hence answering \textbf{(Q3)} positively in this special setting.

Finally, we mention some related papers by Rakesh \cite{Rak}, by Oksanen, Lassas \cite{LO1, LO2} and by Kurylev, Oksanen, Lassas \cite{KLO} , which are concerned with the \emph{hyperbolic} anisotropic Calder\'on problem, which amounts to the case in which the partial DN map is assumed to be known at all frequencies $\lambda$. We refer to \cite{KKL} for a detailed discussion of the hyperbolic anisotropic Calder\'on problem and to \cite{KKLM} for the link between the hyperbolic DN map and the elliptic DN map at all frequencies. We also mention the work of Rakesh \cite{Rak}, who proved that the coefficients of a wave equation on a one-dimensional interval are determined by boundary measurements with sources supported on one end of the interval and the waves observed on the other end. Here again, the uniqueness result entails to know the hyperbolic DN map or equivalently the DN map at all frequencies.


\subsection{Main results}

In our previous paper \cite{DKN2}, we showed that the answers to \textbf{(Q2)} (and thus \textbf{(Q1)}) as well as \textbf{(Q3)} were negative when the Dirichlet and Neumann data are measured on disjoint sets of the boundary. Within the class of \emph{rotationally invariant toric cylinders} of dimensions $2$ and $3$, we constructed an infinite number of pairs of non isometric metrics and potentials having the same partial DN maps when $\Gamma_D \cap \Gamma_N = \emptyset$ and for any fixed frequency $\lambda$ not belonging to the Dirichlet spectra of the corresponding Laplace-Beltrami or Schr\"odinger operators. With respect to the inverse problems \textbf{(Q1)} and \textbf{(Q2)}, an interesting fact was that any pair of such metrics turned out to belong to the same conformal class, where the corresponding conformal factor had to satisfy a certain nonlinear ODE.

In Section \ref{1}, we explain the hidden mechanism behind the results of \cite{DKN2} and as a consequence, construct counterexamples to uniqueness for the anisotropic Calder\'on problem for any smooth compact connected Riemannian manifold with boundary, of dimension higher than $3$, with Dirichlet data and Neumann data measured on disjoint subsets $\Gamma_D$ and $\Gamma_N$ such that $\overline{\Gamma_D \cup \Gamma_N} \ne \partial M$. More precisely, we highlight a subtle gauge invariance admitted by the anisotropic Calder\'on problem with disjoint sets satisfying the above assumption. This gauge invariance is given by certain conformal rescalings of a fixed metric $g$ by a conformal factor that satisfies a nonlinear elliptic PDE of Yamabe type with appropriate boundary conditions (see Theorem \ref{Main-1}). We are able to find smooth positive solutions of this nonlinear equation of Yamabe type using the standard technique of lower and upper solutions. We emphasize that this technique works thanks to the crucial assumption $\overline{\Gamma_D \cup \Gamma_N} \ne \partial M$, that allows us to play on the boundary conditions appearing in the nonlinear equation. The main results of Section \ref{1} are Theorem \ref{Main-1} and Definition \ref{Gauge0}.

In Section \ref{2}, we pursue our analysis by considering the anisotropic Calder\'on problem \textbf{(Q3)} with disjoint sets. We first show that the gauge invariance for the anisotropic Calder\'on problem \textbf{(Q2)} turns out not to be a gauge invariance for the problem \textbf{(Q3)} through the link established in Proposition \ref{Link-c-to-V}. In fact, given a fixed potential $V = V_{g,c,\lambda}$ as in (\ref{Vgc}), there exist infinitely many conformal factors $\tilde{c}$ such that $V_{g,\tilde{c},\lambda} = V$. We show that this family of conformal factors $\tilde{c}$ precisely corresponds to the whole gauge associated to the metric $c^4 g$ in the sense of Definition \ref{Gauge0}. Second, recall that despite of the lack of gauge invariance for the problem \textbf{(Q3)}, non trivial counterexamples to uniqueness for the problem \textbf{(Q3)} were found in \cite{DKN2} within the class of rotationally invariant toric cylinders. In the core of Section \ref{2}, we improve our previous construction and find a large class of new counterexamples to uniqueness for the problem \textbf{(Q3)}. This class consists in cylindrical Riemannian manifolds having two ends, \textit{i.e.} whose boundary consists in two disconnected components, and equipped with a warped product metric. We show non-uniqueness for \textbf{(Q3)} when the Dirichlet and Neumann data belong to distinct connected components of the boundary, a requirement which turns out to be crucial. This is done in Theorem \ref{NonUniquenessQ3}.

In Section \ref{3}, we come back to the anisotropic Calder\'on problem \textbf{(Q2)} and use the counterexamples to uniqueness for the problem \textbf{(Q3)} found in Section \ref{2} to construct counterexamples to uniqueness for the problem \textbf{(Q2)} which do not arise from the gauge invariance defined in Section \ref{1}. To do this, we make crucial use of the link between \textbf{(Q2)} and \textbf{(Q3)} stated in Proposition \ref{Link-c-to-V}. The main point here is to construct from a fixed frequency $\lambda$ and a fixed potential $V$ satisfying certain conditions a conformal factor $c$ such that $V = V_{g,c,\lambda}$ as in (\ref{Vgc}). This amounts to solving a nonlinear elliptic equation of Yamabe type of the same type as the one considered in Section \ref{1}. This is done once again using the lower and upper solutions technique. We stress the fact that the counterexamples to uniqueness for the problem \textbf{(Q2)} obtained in this way are still cylindrical Riemannian manifolds having two ends and that the Dirichlet and Neumann data are measured on distinct connected components of the boundary. The main result in this Section is Theorem \ref{NonUniquenessQ4}.

Finally, in Section \ref{4}, we summarize our results and conjecture some additional results concerning the anisotropic Calder\'on problem with disjoint sets depending on the connectedness or not of the boundary.


\Section{The gauge invariance for the anisotropic Calder\'on problem in dimension $n\geq 3$} \label{1}

Throughout this Section, we assume that $\dim M \geq 3$. The result of the following proposition relies on the simple observation that there is a subtle gauge invariance behind the anisotropic Calder\'on problem when the Dirichlet and Neumann data are measured on disjoint sets. This gauge invariance is given by certain conformal rescalings of a fixed metric $g$ by a strictly positive smooth function that satisfies a nonlinear elliptic PDE of Yamabe type (see (\ref{Main-EDP})).

\begin{prop} \label{Main}
Let $(M,g)$ be a smooth compact connected Riemannian manifold of dimension $n\geq 3$ with smooth boundary $\partial M$ and let $\lambda \in \R$ not belong to the Dirichlet spectrum $\sigma(-\Delta_g)$. Let $\Gamma_D, \Gamma_N$ be open sets of $\partial M$ such that $\Gamma_D \cap \Gamma_N = \emptyset$. If there exists a smooth strictly positive function $c$ satisfying
\begin{equation} \label{Main-EDP}
  \left\{ \begin{array}{cc} \Delta_{g} c^{n-2} + \lambda ( c^{n-2} - c^{n+2}) = 0, & \textrm{on} \ M, \\
	c = 1, & \textrm{on} \ \Gamma_D \cup \Gamma_N, \end{array} \right.
\end{equation}
then the conformally rescaled Riemannian metric $\tilde{g} = c^4 g$ satisfies
$$
  \Lambda_{\tilde{g},\Gamma_D, \Gamma_N}(\lambda) = \Lambda_{g,\Gamma_D, \Gamma_N}(\lambda).  	
$$
\end{prop}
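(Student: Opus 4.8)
The plan is to reduce the statement to the correspondence between the conformal Calder\'on problem \textbf{(Q2)} and the Schr\"odinger problem \textbf{(Q3)} furnished by Proposition \ref{Link-c-to-V}, so that the entire content of the proposition becomes a computation of an effective potential. First I would check that the hypotheses of that proposition hold: by assumption $c = 1$ on $\Gamma_D \cup \Gamma_N$ and $\Gamma_D \cap \Gamma_N = \emptyset$, so Case 1 applies and gives
\begin{equation*}
  \Lambda_{c^4 g, \Gamma_D, \Gamma_N}(\lambda) = \Lambda_{g, V_{g,c,\lambda}, \Gamma_D, \Gamma_N}(\lambda),
\end{equation*}
where $V_{g,c,\lambda} = q_{g,c} + \lambda(1 - c^4)$ and $q_{g,c} = c^{-n+2} \Delta_g c^{n-2}$ as in (\ref{Vgc}). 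Since $\tilde g = c^4 g$, it then suffices to show that this effective potential vanishes.

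The key observation, and really the entire point, is that the Yamabe-type equation (\ref{Main-EDP}) is exactly the condition $V_{g,c,\lambda} = 0$. Rewriting (\ref{Main-EDP}) as $\Delta_g c^{n-2} = \lambda(c^{n+2} - c^{n-2})$ and multiplying through by $c^{-n+2}$ yields $q_{g,c} = \lambda(c^4 - 1)$, whence $V_{g,c,\lambda} = \lambda(c^4 - 1) + \lambda(1 - c^4) = 0$. Consequently the Schr\"odinger operator $-\Delta_g + V_{g,c,\lambda}$ reduces to $-\Delta_g$, so that $\Lambda_{g, V_{g,c,\lambda}, \Gamma_D, \Gamma_N}(\lambda) = \Lambda_{g, \Gamma_D, \Gamma_N}(\lambda)$, and combining this with the displayed identity gives the claim.

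The one point deserving care, rather than a genuine obstacle, is to make sure every partial DN map appearing is well defined, i.e. that $\lambda$ avoids the relevant Dirichlet spectra. We are given $\lambda \notin \sigma(-\Delta_g)$; to see that $\lambda \notin \sigma(-\Delta_{\tilde g})$ as well, I would use the substitution $v = c^{n-2} u$ in (\ref{ConformalScaling}), which turns the Dirichlet eigenvalue problem for $-\Delta_{\tilde g}$ at $\lambda$ into $(-\Delta_g + V_{g,c,\lambda}) v = \lambda v$ with $v_{|\partial M} = 0$; once $V_{g,c,\lambda} = 0$ is known, this is precisely the eigenvalue problem for $-\Delta_g$, so the spectral condition transfers automatically. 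Beyond this, the argument is essentially forced: the smoothness and strict positivity of $c$ make all the conformal manipulations of Proposition \ref{Link-c-to-V} legitimate, and the only thing one must not fumble is the exponent bookkeeping when clearing the factor $c^{-n+2}$.
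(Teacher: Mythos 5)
Your proof is correct and follows essentially the same route as the paper's: both rest on the conformal scaling law (\ref{ConformalScaling}), the substitution $v = c^{n-2}u$, and the observation that (\ref{Main-EDP}) is precisely the condition $V_{g,c,\lambda}=0$, after which the conclusion is read off from (the proof of) Proposition \ref{Link-c-to-V}. Your added remark that the spectral condition transfers to $\tilde g$ is a small bonus the paper leaves implicit.
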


\begin{proof}
Consider the Dirichlet problem at fixed frequency $\lambda$ associated to $\tilde{g} = c^4 g$, \textit{i.e.}
\begin{equation} \label{a1}
  \left\{ \begin{array}{cc} -\Delta_{\tilde{g}} u = \lambda u, & \textrm{on} \ M, \\ u = \psi, & \textrm{on} \ \Gamma_D, \\ u = 0, & \textrm{on} \ \partial M \setminus \Gamma_D. \end{array} \right.
\end{equation}
As in the proof of Proposition \ref{Link-c-to-V} and thanks to our assumptions on $\Gamma_D$ and $\Gamma_N$, it is immediate to see that the function $v = c^{n-2} u$ satisfies
\begin{equation} \label{a3}
  \left\{ \begin{array}{cc} (-\Delta_{g} + V_{g,c,\lambda}) v = \lambda v, & \textrm{on} \ M, \\ v = c^{n-2} \psi, & \textrm{on} \ \Gamma_D, \\ v = 0, & \textrm{on} \ \partial M \setminus \Gamma_D, \end{array} \right.
\end{equation}
where $V_{g,c,_\lambda}$ is given by (\ref{Vgc}). Assume now that there exists a smooth positive function $c: M \longrightarrow \R^{+*}$ satisfying
\begin{equation} \label{Cond-c}
  \left\{ \begin{array}{rcl} V_{g,c,\lambda} & = & 0, \ \textrm{on} \ M, \\
  c & = & 1 \ \textrm{on} \ \Gamma_D \cup \Gamma_N. \end{array} \right.
\end{equation}
Using (\ref{Vgc}), these conditions can be written as the nonlinear Dirichlet problem for $w = c^{n-2}$
\begin{equation} \label{EDPc}
  \left\{ \begin{array}{cc} \Delta_{g} w + \lambda (w - w^{\frac{n+2}{n-2}}) = 0, & \textrm{on} \ M, \\ w = \eta, & \textrm{on} \ \partial M, \end{array} \right.
\end{equation}
where $\eta = 1$ on $\Gamma_D \cup \Gamma_N$. Note that (\ref{EDPc}) is nothing but the PDE (\ref{Main-EDP}) in the statement of the Proposition.

Assuming the existence of a positive solution $w$ of (\ref{EDPc}) and thus of the corresponding conformal factor $c = w^{\frac{1}{n-2}}$ of (\ref{Cond-c}), the function $v = c^{n-2} u$ satisfies
\begin{equation} \label{a4}
  \left\{ \begin{array}{cc} -\Delta_{g} v = \lambda v, & \textrm{on} \ M, \\ v = \psi, & \textrm{on} \ \Gamma_D, \\ v = 0, & \textrm{on} \ \partial M \setminus \Gamma_D. \end{array} \right.
\end{equation}
Therefore, the function $v$ is the unique solution of the Dirichlet problem (\ref{a1}) at fixed frequency $\lambda$ for the metric $g$. We conclude that
$$
\Lambda_{\tilde{g},\Gamma_D, \Gamma_N}(\lambda) = \Lambda_{g,\Gamma_D, \Gamma_N}(\lambda),
$$
as in the proof of Proposition \ref{Link-c-to-V}.
\end{proof}

\begin{rem}
  Using the well-known fact that the potential $q_{g,c}$ in (\ref{q}) can be expressed as
	\begin{equation} \label{ScalarCurvature}
	  q_{g,c} = \frac{n-2}{4(n-1)} \left( Scal_g - c^4 \, Scal_{c^4 g} \right),
	\end{equation}
	where $Scal_g$ and $Scal_{c^4 g}$ denote the scalar curvatures associated to $g$ and $\tilde{g} = c^4 g$ respectively, the nonlinear PDE (\ref{Main-EDP}) satisfied by the conformal factor $c$ may be re-expressed in more geometric terms by observing that $c$ will satisfy (\ref{Main-EDP}) is and only if
\begin{equation} \label{GeometricInterpretation}
  Scal_{c^4 g} = \frac{Scal_g + \frac{4(n-1)}{n-2} \lambda (1-c^4)}{c^4}.
\end{equation}
\end{rem}

In view of Proposition \ref{Main}, we see that in order to construct counterexamples to uniqueness for the anisotropic Calder\'on problem on a smooth compact Riemannian manifold $(M,g)$ of dimension $n\geq 3$ with smooth boundary $\partial M$, where the  Dirichlet and Neumann data measured on disjoint subsets of the boundary, it is sufficient to find a conformal factor $c$ satisfying the nonlinear PDE of Yamabe type (\ref{Main-EDP}) and such that $c \ne 1$ on $M$ (see \ref{Inv-Conformal-1}). We shall see below that this can been done by using the well-known technique of lower and upper solutions.

Indeed, recall that we are interested in solutions $w = c^{n-2}$ of the nonlinear elliptic PDE (see (\ref{EDPc})):
\begin{equation} \label{Eqw}
  \left\{ \begin{array}{cc} \Delta_g w + f(w) =0 , & \textrm{on} \ M, \\ w = \eta, & \textrm{on} \ \partial M, \end{array} \right.
\end{equation}
where $f(w) = \lambda (w-w^{\frac{n+2}{n-2}})$ and $\eta$ is a smooth function on $\partial M$ such that $\eta = 1$ on $\Gamma_D \cup \Gamma_N$. We may thus more generally consider the nonlinear Dirichlet problem
\begin{equation} \label{GeneralDP}
  \left\{ \begin{array}{cc} \Delta_g w + f(x,w) =0 , & \textrm{on} \ M, \\ w = \eta, & \textrm{on} \ \partial M, \end{array} \right.
\end{equation}
where $f$ is a smooth function on $M \times \R$ and $\eta$ is a smooth function on $\partial M$. We recall the definitions of an upper solution and a lower solution of (\ref{GeneralDP}).

\begin{defi}
An upper solution ${\overline{w}}$ is a function in $C^2(M) \cap C^0(\overline{M})$ satisfying
\begin{equation}\label{upper}
\Delta_g {\overline{w}}+ f(x,{\overline{w}}) \leq 0 \  \textrm{on} \ M, \quad \textrm{and} \quad {\overline{w}}_{|\partial M} \geq \eta.
\end{equation}
Similarly, a lower solution ${\underline{w}}$ is a function in $C^2(M) \cap C^0(\overline{M})$ satisfying
\begin{equation}\label{under}
\Delta_g {\underline{w}}+ f(x,{\underline{w}}) \geq 0  \ \textrm{on} \ M, \quad \textrm{and} \quad {\underline{w}}_{|\partial M} \leq \eta.
\end{equation}
\end{defi}

It is well-known (see \cite{Sat}, Thm 2.3.1. or \cite{Ta2}, Section 14.1) that if we can find a lower solution ${\underline{w}}$ and an upper solution ${\overline{w}}$ satisfying ${\underline{w}} \leq {\overline{w}}$ on $M$, then there exists a solution $w \in C^{\infty}(\overline{M})$ of (\ref{GeneralDP}) such that ${\underline{w}} \leq w \leq {\overline{w}}$ on $M$. For completeness, let us briefly sketch the construction of such a solution : we pick $\mu>0$ such that $|\partial_w f(x,w)| \leq \mu$ for $w \in [\min \  {\underline{w}} , \max \ {\overline{w}}]$.  Then, we define recursively a sequence $(w_k)$ by $w_0 = {\underline{w}}$, $w_{k+1} = \Phi(w_k)$ where $\Phi(w) = \varphi$ is given by solving
\begin{equation}
\Delta_g \varphi - \mu \varphi = -\mu w - f(x,w) \ ,\ \varphi_{|\partial M} = \eta.
\end{equation}
Using the maximum principle, we see that this sequence satisfies
\begin{equation}\label{sequence}
{\underline{w}}=w_0 \leq w_1 \leq \cdots \leq w_k \cdots \leq {\overline{w}}.
\end{equation}
We deduce that $w = \displaystyle\lim_{k \to \infty} w_k$ is a solution of (\ref{Eqw}). The details of the construction are given in the above references \cite{Sat, Ta2}.

Now, we can establish the following elementary result.

\begin{prop} \label{NonlinearDirichletPb}

For all $\lambda \geq 0$, (resp. for all $\lambda < 0$), and for all smooth positive functions $\eta$ such that $\eta \ne 1$ on $\partial M$, (resp. $\eta \lneq 1$ on $\partial M$),  there exists a positive solution $w \in C^{\infty}(\overline{M})$ of (\ref{Eqw}) satisfying $w \ne 1$ on $M$.

\end{prop}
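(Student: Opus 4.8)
The plan is to apply the method of lower and upper solutions recalled just above the statement to the nonlinear Dirichlet problem (\ref{Eqw}), in which $f(w) = \lambda(w - w^{\frac{n+2}{n-2}})$ and $\eta$ is the prescribed smooth positive boundary datum. The entire difficulty is to produce a constant-free pair of ordered sub- and supersolutions, using only the freedom we have in $\eta$ (which is assumed $\ne 1$). The key observation is that constants interact nicely with $f$: since $\frac{n+2}{n-2} > 1$, the function $f$ vanishes at $w = 0$ and $w = 1$, and the sign of $f(m)$ for a constant $m > 0$ is governed by the sign of $\lambda$ and whether $m \lessgtr 1$. A constant $m$ is an upper solution exactly when $\Delta_g m + f(m) = f(m) \le 0$ and $m \ge \max_{\partial M}\eta$, and a lower solution when $f(m) \ge 0$ and $m \le \min_{\partial M}\eta$; I would exploit this to sandwich the solution between suitable constants.

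First I would treat the case $\lambda \ge 0$. Here $f(m) = \lambda\, m(1 - m^{\frac{4}{n-2}})$ satisfies $f(m) \ge 0$ for $0 < m \le 1$ and $f(m) \le 0$ for $m \ge 1$. I would therefore take as lower solution the constant $\underline{w} = \min(1, \min_{\partial M}\eta)$ and as upper solution the constant $\overline{w} = \max(1, \max_{\partial M}\eta)$; both are strictly positive, one checks $f(\underline{w}) \ge 0$ and $f(\overline{w}) \le 0$ directly (when $\lambda = 0$ these are trivially zero), and the boundary inequalities $\underline{w} \le \eta \le \overline{w}$ hold by construction, while $\underline{w} \le \overline{w}$ is clear. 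The cited existence theorem (\cite{Sat}, Thm 2.3.1, or \cite{Ta2}, Section 14.1) then yields a smooth solution $w$ with $\underline{w} \le w \le \overline{w}$. Positivity is immediate from $\underline{w} > 0$.

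For $\lambda < 0$ the signs reverse: now $f(m) \le 0$ for $0 < m \le 1$ and $f(m) \ge 0$ for $m \ge 1$, which is why the hypothesis is strengthened to $\eta \lneq 1$, i.e.\ $\eta \le 1$ everywhere with $\eta \ne 1$ somewhere. Then $\underline{w} = \min_{\partial M}\eta$ (a positive constant $\le 1$) is a lower solution since $f(\underline{w}) \ge 0$ and $\underline{w} \le \eta$, and $\overline{w} = 1$ is an upper solution since $f(1) = 0$ and $1 \ge \eta$ on $\partial M$; again $\underline{w} \le \overline{w}$, so the same existence theorem applies and gives a positive smooth $w$.

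It remains to rule out the trivial solution $w \equiv 1$, which is the crux of the statement and the step I expect to require the most care. The point is that $w \equiv 1$ does not meet the boundary condition: since $\eta \ne 1$ on $\partial M$, any solution $w$ of (\ref{Eqw}) has $w_{|\partial M} = \eta \ne 1$, so $w$ cannot be identically $1$ on $\overline{M}$, and a fortiori $w \ne 1$ on $M$ (by continuity the set where $\eta \ne 1$ forces $w \ne 1$ on a neighborhood intersecting $M$). Thus the conformal factor $c = w^{\frac{1}{n-2}}$ is genuinely non-trivial, which is exactly what is needed in view of (\ref{Inv-Conformal-1}) to produce a non-uniqueness example rather than a gauge-trivial one.
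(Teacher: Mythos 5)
Your treatment of the case $\lambda \geq 0$ is correct and is essentially the paper's argument: the paper splits into the three subcases $\eta \gneq 1$, $\eta \lneq 1$, and $\min\eta < 1 < \max\eta$, using exactly the constant barriers that your single choice $\underline{w}=\min(1,\min\eta)$, $\overline{w}=\max(1,\max\eta)$ unifies. The observation that $w\equiv 1$ is excluded by the boundary condition is also fine.

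The case $\lambda<0$, however, contains a genuine error, and it is self-contradictory as written. You correctly record that for $\lambda<0$ one has $f(m)=\lambda m\bigl(1-m^{\frac{4}{n-2}}\bigr)\leq 0$ for $0<m\leq 1$, but in the very next sentence you claim that the constant $\underline{w}=\min_{\partial M}\eta\leq 1$ is a lower solution ``since $f(\underline{w})\geq 0$.'' In fact $f(\min\eta)<0$ whenever $0<\min\eta<1$ and $\lambda<0$, so the subsolution inequality $\Delta_g\underline{w}+f(\underline{w})\geq 0$ fails: no constant in $(0,1)$ can serve as a lower solution here. (Your $\overline{w}=1$ is a legitimate upper solution, so only the lower barrier is broken; the degenerate choice $\underline{w}=0$ would satisfy the inequalities but then the sandwich $0\leq w\leq 1$ no longer gives positivity of $w$ without a further maximum-principle argument.) This is precisely the point where the paper abandons constants: it takes $\underline{w}$ to be the solution of the \emph{linear} Dirichlet problem $\Delta_g\underline{w}+\lambda\underline{w}=0$, $\underline{w}=\eta$ on $\partial M$, which is strictly positive by the strong maximum principle (since $\lambda<0$) and satisfies $\Delta_g\underline{w}+f(\underline{w})=-\lambda\,\underline{w}^{\frac{n+2}{n-2}}\geq 0$, hence is a genuine positive lower solution; it then builds a comparable upper solution from a second linear problem and orders the two by the maximum principle. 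You need some such non-constant construction to close this case.
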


\begin{proof} 1. Assume first that $\lambda \geq 0$. \\
a) If $\eta \gneq 1$, then  ${\underline{w}}=1$ is a lower solution and ${\overline{w}}= \max \eta$ is an upper solution of (\ref{Eqw}). Moreover, they clearly satisfy  ${\underline{w}}  \leq {\overline{w}}$. \\
b) Likewise, if $0 < \eta \lneq 1$, then ${\underline{w}}= \min \eta$ is a lower solution and ${\overline{w}}= 1$ is an upper solution of (\ref{Eqw}). They still satisfy ${\underline{w}}  \leq {\overline{w}}$. \\
c) Finally, if $0 < \min \eta < 1 < \max \eta$, then ${\underline{w}}= \min \eta$ is a lower solution and ${\overline{w}}= \max \eta$ is an upper solution of (\ref{Eqw}). Moreover, they satisfy  ${\underline{w}}  \leq {\overline{w}}$. \\
2. Assume now that $\lambda < 0$ and $0 < \eta \lneq 1$. \\
We define ${\underline{w}}$ as the unique solution of the Dirichlet problem
\begin{equation} \label{Dir1}
  \left\{ \begin{array}{cc} \Delta_g \underline{w} + \lambda \underline{w} =0 , & \textrm{on} \ M, \\ \underline{w} = \eta, & \textrm{on} \ \partial M. \end{array} \right.
\end{equation}
Since $\lambda < 0$, the strong maximum principle implies that $0 <  \underline{w} \leq \max \eta$ on $M$, (see \cite{GT}, Corollary 3.2 and Theorem 3.5). Moreover,
$$\triangle_g \underline{w} + \lambda (\underline{w} - (\underline{w})^{\frac{n+2}{n-2}}) = -\lambda (\underline{w})^{\frac{n+2}{n-2}} \geq 0.$$
It follows that  $\underline{w}$ is a lower solution of (\ref{Eqw}).
We then define ${\overline{w}}$ as the unique solution of the Dirichlet problem
\begin{equation} \label{Dir2}
  \left\{ \begin{array}{cc} \Delta_g \overline{w} + \lambda \overline{w} = \lambda (\max \eta)^{\frac{n+2}{n-2}} , & \textrm{on} \ M, \\ \overline{w} = \eta, & \textrm{on} \ \partial M. \end{array} \right.
\end{equation}
According to the maximum principle, we also have $0 \leq \overline{w}$ on $M$. Now, setting $v= \overline{w}- \max \eta$, and since $\eta \leq 1$, we have
\begin{equation} \label{Dir11}
  \left\{ \begin{array}{cc} \Delta_g   v + \lambda v = \lambda (\max \eta^{\frac{n+2}{n-2}}- \max \eta)  \geq 0 , & \textrm{on} \ M, \\ v = \eta - \max \eta \leq 0, & \textrm{on} \ \partial M. \end{array} \right.
\end{equation}
So, according to the maximum principle again, we deduce that $v \leq 0$ on $M$, or equivalently $\overline{w} \leq \max \eta$. We deduce as previously that $\overline{w}$ is an upper solution of (\ref{Eqw}). Finally, $\overline{w} - \underline{w}$ satisfies
\begin{equation} \label{Dir3}
  \left\{ \begin{array}{cc} \Delta_g (\overline{w} - \underline{w}) + \lambda (\overline{w} - \underline{w}) = \lambda (\max \eta)^{\frac{n+2}{n-2}} < 0 , & \textrm{on} \ M, \\ \overline{w} - \underline{w} = 0, & \textrm{on} \ \partial M. \end{array} \right.
\end{equation}
Then, the maximum principle implies again $\overline{w} \geq \underline{w}$ which finishes the proof.
\end{proof}

In order to use the existence results of Proposition \ref{NonlinearDirichletPb} for the construction of a conformal factor $c$ satisfying (\ref{Main-EDP}) and $c \ne 1$ on $M$, we need to be able to choose $\eta \ne 1$ on $\partial M$. We thus make the crucial assumption on the disjoint Dirichlet and Neumann data that
\begin{equation} \label{Main-Hyp}
  \overline{\Gamma_D \cup \Gamma_N} \ne \partial M.
\end{equation}
Putting together then the results of Proposition \ref{Main} and Proposition \ref{NonlinearDirichletPb}, we have proved

\begin{thm} \label{Main-1}
  Let $(M,g)$ be a smooth compact connected Riemannian manifold of dimension $n\geq 3$ with smooth boundary $\partial M$. Let $\Gamma_D, \Gamma_N$ be open subsets of $\partial M$ such that $\Gamma_D \cap \Gamma_N = \emptyset$ and $\overline{\Gamma_D \cup \Gamma_N} \ne \partial M$. Consider a conformal factor $c \ne 1$ on $M$ whose existence is given in Proposition \ref{NonlinearDirichletPb}, defined as a smooth solution of the nonlinear Dirichlet problem
\begin{equation} \label{Main-EDP-1}
  \left\{ \begin{array}{cc} \Delta_{g} c^{n-2} + \lambda (c^{n-2} - c^{n+2}) = 0, & \textrm{on} \ M, \\
	c^{n-2} = \eta, & \textrm{on} \ \partial M, \end{array} \right.
\end{equation}
where $\eta$ is a suitable smooth positive function on $\partial M$ satisfying $\eta = 1$ on $\Gamma_D \cup \Gamma_N$ and $\eta \ne 1$ on $\partial M \setminus (\Gamma_D \cup \Gamma_N)$. Then the Riemannian metric $\tilde{g} = c^4 g$ with $c \ne 1$ on $M$ satisfies
$$
  \Lambda_{\tilde{g},\Gamma_D, \Gamma_N}(\lambda) = \Lambda_{g,\Gamma_D, \Gamma_N}(\lambda).  	
$$
\end{thm}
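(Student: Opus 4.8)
The plan is to obtain Theorem~\ref{Main-1} as the immediate combination of Proposition~\ref{Main} and Proposition~\ref{NonlinearDirichletPb}, with the topological hypothesis (\ref{Main-Hyp}) used solely to manufacture admissible boundary data $\eta$ for the nonlinear Dirichlet problem (\ref{Eqw}). Indeed, Proposition~\ref{Main} reduces the matching of the partial DN maps to the existence of a smooth strictly positive $c$ solving (\ref{Main-EDP-1}) with $c=1$ on $\Gamma_D \cup \Gamma_N$, while Proposition~\ref{NonlinearDirichletPb} guarantees such a solution with $c \not\equiv 1$, as soon as we can feed it a boundary value $\eta$ that is $\neq 1$ (resp. $\lneq 1$) yet still equal to $1$ on $\Gamma_D \cup \Gamma_N$.

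First I would exploit (\ref{Main-Hyp}): since $\overline{\Gamma_D \cup \Gamma_N}$ is a proper closed subset of $\partial M$, its complement $U := \partial M \setminus \overline{\Gamma_D \cup \Gamma_N}$ is a nonempty open subset of the boundary, disjoint from $\Gamma_D \cup \Gamma_N$. This open set is precisely the room --- absent in the full-data and local-data situations, where one is forced to take $\eta \equiv 1$ --- in which I am free to prescribe boundary values away from $1$.

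Next I would build a smooth positive $\eta$ on $\partial M$ adapted to the sign of $\lambda$, keeping $\eta = 1$ on $\Gamma_D \cup \Gamma_N$ in every case. For $\lambda \geq 0$, Proposition~\ref{NonlinearDirichletPb} only requires $\eta \not\equiv 1$, so I take $\eta = 1 + \chi$ with $\chi$ a smooth bump supported in $U$ and not identically zero. For $\lambda < 0$, the proposition demands $\eta \lneq 1$; I then take $\eta = 1 - \chi$ with $\chi \geq 0$ smooth, supported in $U$, positive somewhere, and small enough to keep $\eta$ strictly positive, so that $\eta \leq 1$ globally with strict inequality on part of $U$. Such an $\eta$ exists by a standard bump-function construction using that $U$ is open and nonempty. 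Feeding $\eta$ into Proposition~\ref{NonlinearDirichletPb} yields a smooth positive solution $w \not\equiv 1$ of (\ref{Eqw}); setting $c = w^{1/(n-2)}$ produces a smooth strictly positive function solving (\ref{Main-EDP-1}), equal to $1$ on $\Gamma_D \cup \Gamma_N$, and with $c \not\equiv 1$ on $M$. Proposition~\ref{Main} then applies and delivers $\Lambda_{\tilde{g},\Gamma_D,\Gamma_N}(\lambda) = \Lambda_{g,\Gamma_D,\Gamma_N}(\lambda)$ for $\tilde{g} = c^4 g$, which is exactly the assertion.

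The one step deserving genuine care --- and the crux of the argument --- is the sign-constrained choice of $\eta$ when $\lambda < 0$: there the upper/lower-solution scheme behind Proposition~\ref{NonlinearDirichletPb} needs the stronger condition $\eta \lneq 1$ rather than merely $\eta \not\equiv 1$, which forces the perturbation to be \emph{subtracted} from $1$ and kept small so that positivity of $\eta$, and hence of the conformal factor $c$, is preserved. Everything else is a direct transcription of the two cited propositions.
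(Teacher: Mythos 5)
Your proposal is correct and follows exactly the paper's own route: the authors likewise obtain Theorem \ref{Main-1} by combining Proposition \ref{Main} with Proposition \ref{NonlinearDirichletPb}, using the hypothesis $\overline{\Gamma_D \cup \Gamma_N} \ne \partial M$ only to permit a boundary datum $\eta$ equal to $1$ on $\Gamma_D \cup \Gamma_N$ but not identically $1$ (with the sign constraint $\eta \lneq 1$ when $\lambda < 0$). Your explicit bump-function construction of $\eta$ merely spells out a detail the paper leaves implicit.
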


This gauge invariance for the anisotropic Calder\'on problem with disjoint data can be formalized in the following way.

\begin{defi}[Gauge invariance] \label{Gauge0}
  Let $(M,g)$ and $(M,\tilde{g})$ be smooth compact connected Riemannian manifolds of dimension $n\geq 3$ with smooth boundary $\partial M$. Let $\lambda \in \R$  not belong to the union of the Dirichlet spectra of $-\Delta_g$ and $-\Delta_{\tilde{g}}$. Let $\Gamma_D, \Gamma_N$ be open subsets of $\partial M$ such that $\Gamma_D \cap \Gamma_N = \emptyset$ and $\overline{\Gamma_D \cup \Gamma_N} \ne \partial M$. We say that $g$ and $\tilde{g}$ are gauge related if there exists a smooth positive conformal factor $c$ such that: \\
\begin{equation} \label{Gauge}
   \left\{ \begin{array}{rl} \tilde{g} & = c^4 g, \\
	                           \Delta_{g} c^{n-2} + \lambda (c^{n-2} - c^{n+2}) & = 0, \textrm{on} \ M, \\
	                           c & = 1, \textrm{on} \ \Gamma_D \cup \Gamma_N, \\
														 c & \ne 1, \textrm{on} \ \partial M \setminus (\Gamma_D \cup \Gamma_N).
														\end{array} \right.
   \end{equation}
In that case, we have: $\Lambda_{\tilde{g}, \Gamma_D, \Gamma_N}(\lambda) = \Lambda_{g, \Gamma_D, \Gamma_N}(\lambda)$.
\end{defi}

\begin{rem}
  In dimension $2$, the gauge invariance described in Definition \ref{Gauge0} for the anisotropic Calder\'on problem with disjoint data is not relevant except for the case of zero frequency. Indeed, the nonlinear PDE (\ref{Gauge}) that should satisfy the conformal factor $c$ becomes
\begin{equation} \label{EDP-Dim2}
  \lambda (1 - c^4) = 0, \ \textrm{on} \ M.
\end{equation}
In other words, $c$ must be identically equal to $1$ if $\lambda \ne 0$. Recalling that in dimension $2$ and for zero frequency, a conformal transformation is already known to be a gauge invariance of the anisotropic Calder\'on problem, we see that our construction will not lead to new counterexamples to uniqueness in dimension $2$, for any frequency $\lambda$.
\end{rem}

We conclude this Section by stating a version of the anisotropic Calder\'on conjecture with disjoint data modulo the previously defined gauge invariance. \\

\noindent \textbf{(Q4)} \emph{Let $M$ be a smooth compact connected manifold with smooth boundary $\partial M$ and let $g,\, \tilde{g}$ be smooth Riemannian metrics on $M$. Let $\Gamma_D, \Gamma_N$ be any open sets of $\partial M$ such that $\Gamma_D \cap \Gamma_N = \emptyset$ and $\lambda \in \R$ not belong to $\sigma(-\Delta_g) \cup \sigma(-\Delta_{\tilde{g}})$. If $\Lambda_{g,\Gamma_D, \Gamma_N}(\lambda) = \Lambda_{\tilde{g},\Gamma_D, \Gamma_N}(\lambda)$, is it true that $g = \tilde{g}$ up to the gauge invariances: \\
1. (\ref{Inv-Diff}) in any dimension, \\
2. (\ref{Inv-Conf}) if $\dim M = 2$ and $\lambda = 0$}, \\
3. (\ref{Gauge}) if $\dim M \geq 3$ and $\overline{\Gamma_D \cup \Gamma_N} \ne \partial M$?


\Section{The anisotropic Calder\'on problem for Schr\"odinger operators in dimension $n\geq 2$} \label{2}

In this Section, we consider the anisotropic Calder\'on problem \textbf{(Q3)} for Schr\"odinger operators on a fixed smooth compact connected Riemannian manifold $(M,g)$ of dimension $n\geq 2$, with smooth boundary $\partial M$, under the assumption that the Dirichlet and Neumann data are measured on disjoint subsets of the boundary. We first show that the previously constructed counterexamples to uniqueness for the anisotropic Calder\'on problem \textbf{(Q2)} in dimension higher than $3$ cannot be used to construct counterexamples to uniqueness for \textbf{(Q3)} through the link (\ref{Link}). To this effect, we start by proving the following elementary lemma:

\begin{lemma}\label{lemmafactor}
Let $(M,g)$ be a smooth compact connected Riemannian manifolds of dimension $n\geq 3$ with smooth boundary $\partial M$. Consider two smooth conformal factors $c_1$ and $c_2$ such that $c:= \frac{c_2}{c_1}$ satisfies
\begin{equation}\label{factor}
\Delta_{c_1^4 g} c^{n-2} + \lambda (c^{n-2} - c^{n+2}) = 0 \ \rm{on}\  M.
\end{equation}
Then,
\begin{equation} \label{c2}
  V_{g,c_1,\lambda} = V_{g,c_2,\lambda}.
\end{equation}
\end{lemma}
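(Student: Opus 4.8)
The plan is to realize the passage from $g$ to $c_2^4 g$ as a two-step conformal change and to track how the associated potential transforms. Since $c = c_2/c_1$, we have $c_2 = c\,c_1$, hence
\begin{equation*}
  c_2^4 g = c^4 (c_1^4 g).
\end{equation*}
Thus one first rescales $g$ to $g_1 := c_1^4 g$, and then rescales $g_1$ to $c^4 g_1 = c_2^4 g$. The whole argument reduces to establishing a composition law for the potentials $q_{g,\cdot}$ under such iterated conformal changes and then inserting the hypothesis (\ref{factor}).

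First I would establish the composition law
\begin{equation*}
  q_{g,c_2} = q_{g,c_1} + c_1^4 \, q_{c_1^4 g,\, c}.
\end{equation*}
There are two natural routes. One can compose the scaling identity (\ref{ConformalScaling}) twice, once for $g \to c_1^4 g$ with factor $c_1$ and once for $c_1^4 g \to c_2^4 g$ with factor $c$, and compare with the direct identity for $g \to c_2^4 g$; the bookkeeping uses $c_2^{n-2} = c_1^{n-2} c^{n-2}$ and $c_2^{-(n+2)} = c_1^{-(n+2)} c^{-(n+2)}$. More cleanly, I would use the geometric expression (\ref{ScalarCurvature}), namely $q_{g,a} = \frac{n-2}{4(n-1)}(Scal_g - a^4 Scal_{a^4 g})$, and observe that the intermediate term $Scal_{c_1^4 g}$ telescopes: indeed $c_1^4 q_{c_1^4 g, c} = \frac{n-2}{4(n-1)}(c_1^4 Scal_{c_1^4 g} - c_2^4 Scal_{c_2^4 g})$ (using $c_1^4 c^4 = c_2^4$), which adds to $q_{g,c_1}$ to give precisely $q_{g,c_2}$.

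Next I would feed in the hypothesis (\ref{factor}), which rearranges to $\Delta_{c_1^4 g} c^{n-2} = \lambda(c^{n+2} - c^{n-2})$. By the definition $q_{c_1^4 g, c} = c^{-n+2}\Delta_{c_1^4 g} c^{n-2}$, this gives $q_{c_1^4 g, c} = \lambda(c^4 - 1)$; equivalently, (\ref{factor}) says exactly that $V_{c_1^4 g, c, \lambda} = 0$. Substituting into the composition law and using $c_1^4 c^4 = c_2^4$ yields $q_{g,c_2} = q_{g,c_1} + \lambda(c_2^4 - c_1^4)$. Hence
\begin{equation*}
  V_{g,c_2,\lambda} = q_{g,c_2} + \lambda(1 - c_2^4) = q_{g,c_1} + \lambda(c_2^4 - c_1^4) + \lambda(1 - c_2^4) = q_{g,c_1} + \lambda(1 - c_1^4) = V_{g,c_1,\lambda},
\end{equation*}
which is (\ref{c2}). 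The only genuine obstacle is the composition law itself, where one must keep the conformal exponents straight; the scalar-curvature formulation makes the telescoping transparent and reduces everything else to elementary algebra.
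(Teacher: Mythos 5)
Your proof is correct and follows essentially the same route as the paper: both arguments amount to applying the conformal scaling identity (\ref{ConformalScaling}) with the factor $c_1$ to $c^{n-2}$ (using $c_1^{n-2}c^{n-2}=c_2^{n-2}$) and invoking the definition of $q_{g,c_2}$ to arrive at $q_{g,c_2}-q_{g,c_1}+\lambda(c_1^4-c_2^4)=0$, which is exactly your composition law combined with the hypothesis $q_{c_1^4g,c}=\lambda(c^4-1)$. The scalar-curvature telescoping you offer as an alternative derivation of the composition law is a pleasant geometric repackaging, but it does not change the substance of the argument.
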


\begin{proof}
Using (\ref{ConformalScaling}) and (\ref{q}) with the conformal factor $c_1$, we obtain easily
 \begin{equation}
 (\Delta_g -q_{g,c_1}) c_2^{n-2} + \lambda \left( c_1^4 c_2^{n-2}-c_2^{n+2} \right) =0.
 \end{equation}
 So, using (\ref{q}) again with the conformal factor $c_2$, we get
 \begin{equation}
 (q_{g, c_2} - q_{g,c_1}) + \lambda \left( c_1^4 -c_2^4\right) =0,
 \end{equation}
 or equivalently
$$
  V_{g,c_1,\lambda} = V_{g,c_2,\lambda}.
$$
\end{proof}

\vspace{0.2cm}

 As a consequence, let $\Gamma_D, \Gamma_N$ be open subsets of $\partial M$ such that $\Gamma_D \cap \Gamma_N = \emptyset$. Consider two smooth conformal factors $c_1$ and $c_2$ such that the metrics $G=c_1^4 g $ and $\tilde{G}=c_2^4 g$ are gauge equivalent in the sense of Definition \ref{Gauge0}, \textit{i.e.} $\Lambda_{G, \Gamma_D, \Gamma_N}(\lambda) = \Lambda_{\tilde{G}, \Gamma_D, \Gamma_N}(\lambda)$. Then, we obtain from (\ref{Link}) that
$$
  \Lambda_{g, V_{g,c_1,_\lambda}, \Gamma_D, \Gamma_N}(\lambda) = \Lambda_{g, V_{g,c_2,\lambda}, \Gamma_D, \Gamma_N}(\lambda),
$$
but Lemma \ref{lemmafactor} implies that  $ V_{g,c_1,\lambda} = V_{g,c_2,\lambda}$. Thus, the gauge invariance for the anisotropic Calder\'on problem \textbf{(Q2)} with disjoint data highlighted in Section \ref{1} is not a gauge invariance for the corresponding anisotropic Calder\'on problem \textbf{(Q3)}. In other words, we just showed that the gauge invariance for \textbf{(Q2)} corresponds in fact to all the possible conformal factors $c$ satisfying $V_{g,c,\lambda} = V_{g,c_0,\lambda} = q$ for a fixed conformal factor $c_0$ or related potential $q$.

\vspace{0.5cm}

Nevertheless, we exhibited in \cite{DKN2} some constructive counterexamples to uniqueness for the anisotropic Calder\'on problem \textbf{(Q3)} with disjoint sets on smooth compact connected Riemannian toric cylinders equipped with a warped product metric in dimensions $2$ or $3$. Precisely, we refer to Theorems 3.2 and 3.4 in \cite{DKN2} for counterexamples of \textbf{(Q2)} and \textbf{(Q3)} respectively in dimension $2$ and to Theorem 4.7 in \cite{DKN2} for counterexamples of \textbf{(Q3)} in dimension $3$.

In this Section, we generalize the results of \cite{DKN2} and show that the same type of constructive counterexamples to uniqueness can be obtained for any smooth compact connected Riemannian cylinder $M$ having two ends (meaning that the boundary $\partial M$ consists in two connected components), equipped with a warped product metric. More precisely, we consider the general model in which $M = [0,1]\times K$, where $K$ is an arbitrary $(n-1)$-dimensional closed manifold, equipped with a Riemannian metric of the form
\begin{equation} \label{Metric}
  g = f^4(x) [dx^2 + g_K],
\end{equation}
where $f$ is a smooth strictly positive function on $[0,1]$ and $g_K$ denotes a smooth Riemannian metric on $K$. Clearly, $(M,g)$ is a $n$-dimensional warped product cylinder and the boundary $\partial M$ has two connected components, namely $\partial M = \Gamma_0 \cup \Gamma_1$ where $\Gamma_0 = \{0\} \times K$ and $\Gamma_1 = \{1\} \times K$ correspond to the two ends of $(M,g)$ . The positive Laplace-Beltrami operator on $(M,g)$ has the expression
\begin{equation} \label{Laplacian}
  -\Delta_g = f^{-(n+2)} \left( -\partial_x^2 - \triangle_K + q_f(x) \right) f^{n-2},
\end{equation}
where $-\triangle_K$ denotes the positive Laplace-Beltrami operator on $(K,g_K)$ and $q_f = \frac{(f^{n-2})''}{f^{n-2}}$.

Let us consider a potential $V = V(x) \in L^\infty(M)$ and $\lambda \in \R$ such that $\lambda \notin \{ \lambda_j\}_{j \geq 1}$ where $\{ \lambda_j\}_{j \geq 1}$ is the Dirichlet spectrum of $-\Delta_g + V$. We are interested in the unique solution $u$ of the Dirichlet problem
\begin{equation} \label{eq0}
  \left\{ \begin{array}{rcl}
	(-\triangle_g + V) u & = & \lambda u, \ \textrm{on} \ M, \\
	u & = & \psi, \ \textrm{on} \ \partial M.
	\end{array} \right.
\end{equation}
Thanks to (\ref{Laplacian}) and setting $v = f^{n-2} u$, this can be written as
\begin{equation} \label{Eq1}
  \left\{ \begin{array}{rcl}
	\left[ -\partial^2_x - \triangle_K + q_f + (V-\lambda) f^4 \right] v & = & 0, \ \textrm{on} \ M, \\
	v & = & f^{n-2} \psi, \ \textrm{on} \ \partial M.
	\end{array} \right.
\end{equation}

In order to construct the DN map corresponding to the problem (\ref{eq0}), we shall use the following notations. Since the boundary $\partial M$ of $M$ has two disjoint components $\partial M = \Gamma_0 \cup \Gamma_1$, we can decompose the Sobolev spaces $H^s(\partial M)$ as $H^s(\partial M) = H^s(\Gamma_0) \oplus H^s(\Gamma_1)$ for any $s \in \R$ and we shall use the vector notation
$$
 \varphi = \left( \begin{array}{c} \varphi^0 \\ \varphi^1 \end{array} \right),
$$
to denote the elements $\varphi$ of $H^s(\partial M) = H^s(\Gamma_0) \oplus H^s(\Gamma_1)$. The DN map is a linear operator from $H^{1/2}(\partial M)$ to $H^{-1/2}(\partial M)$ and thus has the structure of an operator valued $2 \times 2$ matrix
$$
  \Lambda_g(\lambda) = \left( \begin{array}{cc} \Lambda_{g,\Gamma_0,\Gamma_0}(\lambda) & \Lambda_{g,\Gamma_1, \Gamma_0}(\lambda) \\ \Lambda_{g,\Gamma_0, \Gamma_1}(\lambda) & \Lambda_{g,\Gamma_1,\Gamma_1}(\lambda) \end{array} \right),
$$
whose components are operators from $H^{1/2}(K)$ to $H^{-1/2}(K)$.

Now we use the warped product structure of $(M,g)$ and the fact that $V = V(x)$ to find a simple expression of the DN map by decomposing all the relevant quantities onto a Hilbert basis of harmonics $(Y_k)_{k \geq 0}$ of the Laplace-Beltrami operator $-\triangle_K$ on the closed manifold $K$. We first write $\psi = (\psi^0, \psi^1) \in H^{1/2}(\Gamma_0) \times H^{1/2}(\Gamma_1)$ using their Fourier expansions as
$$
 \psi^0 = \sum_{k \geq 0} \psi^0_k Y_k, \quad \psi^1 = \sum_{k \geq 0} \psi^1_k Y_k.
$$
Note that for any $s \in \R$, the space $H^{s}(K)$ can be described as
$$
  \varphi \in H^{s}(K) \ \Longleftrightarrow \ \left\{ \varphi \in \D'(K), \ \varphi = \sum_{k \geq 0} \varphi_k Y_k, \quad \sum_{k \geq 0} (1 + \mu_k)^{s} |\varphi_k|^2 < \infty \ \right\},
$$
where $0 = \mu_0 < \mu_1 \leq \mu_2 \leq \dots$ are the eigenvalues of $-\triangle_K$.

Now the unique solution $v$ of (\ref{Eq1}) takes the form
$$
  v = \sum_{k \geq 0} v_k(x) Y_k(\omega),
$$
where the functions $v_k$ are the unique solutions of the boundary value problems given by
\begin{equation} \label{Eq2}
  \left\{ \begin{array}{c} -v_k'' + [ q_f + (V-\lambda) f^4] v_k = -\mu_k v_k, \ \textrm{on} \ [0,1], \\
   v_k(0) = f^{n-2}(0) \psi^0_k, \quad v_k(1) = f^{n-2}(1) \psi^1_k. \end{array} \right.
\end{equation}
Moreover the DN map can be diagonalized in the Hilbert basis $\{ Y_k \}_{k \geq 0}$ and thus shown to take the following convenient expression
\begin{equation} \label{DN1}
  \Lambda_{g,V}(\lambda)_{|<Y_k>} = \Lambda^k_{g,V}(\lambda)
	= \left( \begin{array}{c} \frac{(n-2) f'(0)}{f^{n+1}(0)} v_k(0) - \frac{v_k'(0)}{f^n(0)} \\
		-\frac{(n-2) f'(1)}{f^{n+1}(1)} v_k(1) + \frac{v_k'(1)}{f^n(1)} \end{array} \right) .
\end{equation}

Let us now interpret the quantities $v_k(0), v_k'(0), v_k(1), v_k'(1)$ in terms of the boundary values of $v_k$. For this, we introduce the characteristic and Weyl-Titchmarsh functions of the boundary value problem
\begin{equation} \label{Eq3}
  \left\{ \begin{array}{c} -v'' + [q_{f}(x) +(V-\lambda) f^4(x)] v = - \mu v, \\
  v(0) = 0, \quad v(1) = 0.  \end{array}\right.
\end{equation}
Note that the equation (\ref{Eq3}) is nothing but equation (\ref{Eq2}) in which the angular momentum $-\mu_k$ is written as $-\mu$ and is interpreted as the \emph{spectral parameter} of the equation. Since the potential $q_f +(V-\lambda) f^4 \in L^1([0,1])$ and is real, we can define for all $\mu \in \C$ two fundamental systems of solutions of (\ref{Eq3})
$$
  \{ c_0(x,\mu), s_0(x,\mu)\}, \quad \{ c_1(x,\mu), s_1(x,\mu)\},
$$
by imposing the Cauchy conditions
\begin{equation} \label{FSS}
  \left\{ \begin{array}{cccc} c_0(0,\mu) = 1, & c_0'(0,\mu) = 0, & s_0(0,\mu) = 0, & s_0'(0,\mu) = 1, \\
 	                  c_1(1,\mu) = 1, & c'_1(1,\mu) = 0, & s_1(1,\mu) = 0, & s'_1(1,\mu) = 1. \end{array} \right.
\end{equation}

\begin{rem} \label{Utile1}
In terms of the Wronskian $W(u,v) = uv' - u'v$, we have
$$
  W(c_0,s_0) = 1, \quad W(c_1,s_1) = 1
$$
Moreover, we remark (see \cite{PT}) that the functions $\mu \to c_j(x,\mu), \, s_j(x,\mu)$ and their derivatives with respect to $x$ are entire functions of order $\frac{1}{2}$.
\end{rem}

Following \cite{DKN2}, we define the characteristic function of (\ref{Eq3}) by
\begin{equation} \label{Char}
  \Delta_{g,V}(\mu) = W(s_0, s_1),
\end{equation}
and the Weyl-Titchmarsh functions by
\begin{equation} \label{WT}
  M_{g,V}(\mu) = - \frac{W(c_0, s_1)}{\Delta_{g,V}(\mu)}, \quad N_{g,V}(\mu) = - \frac{W(c_1, s_0)}{\Delta_{g,V}(\mu)}.
\end{equation}

\begin{rem} \label{Utile2}
 1. Since the function $\Delta_{g,V}$ is entire, its zeros form a discrete set in $\C$. We denote this set by $(\alpha_k)_{k \geq 1}$ and remark that they correspond to "minus"\footnote{since the spectral parameter of (\ref{Eq3}) is $-\mu$.} the Dirichlet spectrum of the 1D Schr\"odinger operator $-\frac{d^2}{dx^2} + [q_f + (V-\lambda) f^4]$. Moreover, these zeros are simple, (see Theorem 2, p. 30 of \cite{PT}).\\
2. The functions $M_{g,V}$ and $N_{g,V}$ are meromorphic with poles given by $(\alpha_k)_{k \geq 1}$. Under our assumption that $\lambda$ does not belong to the Dirichlet spectrum of $-\triangle_g + V$, we can show that the eigenvalues $(\mu_k)_{k \geq 1}$ of $-\triangle_K$ cannot be poles of $M_{g,V}$ and $N_{g,V}$. In particular, $0$ is not a pole of $M_{g,V}$ and $N_{g,V}$. We refer to \cite{DKN2}, Remark 3.1, for the detailed proof of this assertion.
\end{rem}

Writing the solution $v_k$ of (\ref{Eq3}) as
$$
  v_k(x) = \alpha \,c_0(x,\mu_k) + \beta \,s_0(x,\mu_k) = \gamma \,c_1(x,\mu_k) + \delta \,s_1(x,\mu_k),
$$
for some constants $\alpha,\beta,\gamma,\delta$, a straightforward calculation as in \cite{DKN2}, section 4, shows that the DN map $\Lambda_g^k(\lambda)$ on each harmonic $Y_k, \, k \geq 0$ has the expression
\begin{equation} \label{DN-Partiel}
  \Lambda^k_g(\lambda) = \left( \begin{array}{cc} \frac{(n-2)f'(0)}{f^3(0)} - \frac{M_{g,V}(\mu_k)}{f^2(0)} & -\frac{f^{n-2}(1)}{f^n(0) \Delta_{g,V}(\mu_k)} \\ -\frac{f^{n-2}(0)}{f^n(1) \Delta_{g,V}(\mu_k)} & -\frac{(n-2)f'(1)}{f^3(1)} - \frac{N_{g,V}(\mu_k)}{f^2(1)} \end{array} \right) .
\end{equation}
Hence the DN map $\Lambda_{g,V}^k(\lambda)$ on each harmonic $Y_k$ is simply a multiplication by a $2 \times 2$ matrix whose coefficients are expressed in terms of some boundary values of the metric $g$ and its first normal derivative $\partial_\nu g$ as well as the characteristic function $\Delta_{g,V}$ for the anti-diagonal components and the Weyl-Titchmarsh functions $M_{g,V} $and $N_{g,V}$ for the diagonal components evaluated at the $\{\mu_k\}_{k \geq 0}$ which are the eigenvalues of the Laplacian $-\triangle_K$ on the closed manifold $K$. Note that the non locality of the DN map is seen through the multiplication by the functions $\Delta_{g,V}(\mu_k), M_{g,V}(\mu_k)$ and $N_{g,V}(\mu_k)$ since they depend on the whole potential $q_f + (V-\lambda) f^4$ and thus on the whole metric $g$ and potential $V$.

Let us now come back to the study of the anisotropic Calder\'on problem \textbf{(Q3)} for metrics (\ref{Metric}) and potentials $V = V(x)$ when the Dirichlet and Neumann data are measured on disjoint sets of the boundary. Assume precisely that $\Gamma_D, \Gamma_N$ are open subsets of $\partial M$ that belong to distinct connected components of $\partial M$. For instance, if we choose $\Gamma_D \subset \Gamma_0$ and $\Gamma_N \subset \Gamma_1$, then the measured partial DN map $\Lambda_{g,V,\Gamma_D, \Gamma_N}(\lambda)$ is given by
\begin{equation} \label{DN2}
  \Lambda_{g,V,\Gamma_D, \Gamma_N}(\lambda) \psi = - \left( \sum_{k \geq 0} \frac{f^{n-2}(0)}{f^n(1) \Delta_{g,V}(\mu_k)} \psi_k Y_k \right)_{|\Gamma_N},
\end{equation}
where $\psi = \sum_{k \geq 0} \psi_k Y_k$ and supp$\,\psi \subset \Gamma_D$. It is clear from the expression (\ref{DN2}) that the characteristic function $\Delta_{g,V}$ is the essential quantity that determines uniquely $\Lambda_{g,V,\Gamma_D, \Gamma_N}(\lambda)$ when $\Gamma_D$ and $\Gamma_N$ belong to distinct connected components of the boundary. We thus consider the following question: can we find potentials $\tilde{V}$ distinct from $V$ and such that $\Delta_{g,V}(\mu) = \Delta_{g,\tilde{V}}(\mu)$ for all $\mu \in \C$? In the positive case, we will thus have found counterexamples to uniqueness for the Calder\'on problem \textbf{(Q3)} with disjoint data.

The answer is yes and is provided by the following key Lemma.

\begin{lemma} \label{Link-Iso}
  Let $g$ be a fixed metric as in (\ref{Metric}) and $V = V(x), \tilde{V} = \tilde{V}(x) \in L^\infty(M)$. Then
$$
	\Delta_{g,V}(\mu) = \Delta_{g,\tilde{V}}(\mu), \quad \forall \mu \in \C,
$$
if and only if
$$
  q_f + (V-\lambda)f^4 \ \textrm{and} \ q_f + (\tilde{V}-\lambda)f^4 \ \textrm{are isospectral for} \ (\ref{Eq3}).
$$	
\end{lemma}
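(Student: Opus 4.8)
The plan is to reduce the equality of the two entire functions $\Delta_{g,V}$ and $\Delta_{g,\tilde V}$ to the equality of their zero sets, exploiting that each is an entire function of order $\frac12$ whose leading asymptotics are \emph{independent} of the potential. Throughout I write $p = q_f + (V-\lambda)f^4$ and $\tilde p = q_f + (\tilde V-\lambda)f^4$ for the two potentials appearing in (\ref{Eq3}). First I would record a more convenient closed form for the characteristic function. Since $s_0$ and $s_1$ solve (\ref{Eq3}) for the same $\mu$, the Wronskian $W(s_0,s_1)$ is independent of $x$; evaluating it at $x=1$ with the Cauchy data (\ref{FSS}) gives $\Delta_{g,V}(\mu)=W(s_0,s_1)=s_0(1,\mu)$. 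By Remark \ref{Utile1} this is entire in $\mu$ of order $\frac12$, and by Remark \ref{Utile2} its zeros $(\alpha_k)$ are simple and coincide exactly with ``minus'' the Dirichlet spectrum of $-\frac{d^2}{dx^2}+p$ on $[0,1]$; the same holds for $\Delta_{g,\tilde V}(\mu)=s_0(1,\mu)[\tilde p]$, with zeros $(\tilde\alpha_k)$.

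The forward implication is then immediate: if $\Delta_{g,V}\equiv\Delta_{g,\tilde V}$, they share the same zeros with the same (simple) multiplicities, so $p$ and $\tilde p$ have the same Dirichlet spectrum, i.e.\ they are isospectral for (\ref{Eq3}). For the converse, I would assume isospectrality, so that $\{\alpha_k\}=\{\tilde\alpha_k\}$ as sets and, by simplicity, as divisors; in particular $\mu=0$ is a zero of one function iff it is a zero of the other, with the same order $m\in\{0,1\}$. Since both functions have order $\frac12<1$, Hadamard's factorization theorem yields genus-zero canonical products
\begin{equation*}
\Delta_{g,V}(\mu)=C\,\mu^{m}\prod_{k}\Bigl(1-\frac{\mu}{\alpha_k}\Bigr),\qquad
\Delta_{g,\tilde V}(\mu)=\tilde C\,\mu^{m}\prod_{k}\Bigl(1-\frac{\mu}{\alpha_k}\Bigr),
\end{equation*}
with the \emph{same} exponent $m$ and the \emph{same} product. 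Their ratio is thus the constant $C/\tilde C$, and it remains only to show this constant equals $1$.

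To pin down the constant I would use the universal large-$\mu$ asymptotics of $s_0(1,\mu)$. Writing $\omega=\sqrt\mu$ and using the Volterra integral equation $s_0(x,\mu)=\frac{\sinh(\omega x)}{\omega}+\int_0^x\frac{\sinh(\omega(x-t))}{\omega}\,p(t)\,s_0(t,\mu)\,dt$, one iterates to obtain $s_0(1,\mu)=\frac{\sinh\sqrt\mu}{\sqrt\mu}\bigl(1+O(\mu^{-1/2})\bigr)$ as $\mu\to+\infty$ along the real axis, and the identical expansion holds for $\tilde p$. Hence $\Delta_{g,V}(\mu)/\Delta_{g,\tilde V}(\mu)\to 1$, which forces $C=\tilde C$ and therefore $\Delta_{g,V}\equiv\Delta_{g,\tilde V}$, completing the proof.

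The only genuinely technical point is this last step. The simplicity of the zeros and the order-$\frac12$ bound (both from \cite{PT}, recorded in Remarks \ref{Utile1}--\ref{Utile2}) make the application of Hadamard's theorem routine, so isospectrality already gives equality of $\Delta_{g,V}$ and $\Delta_{g,\tilde V}$ \emph{up to a multiplicative constant}. The crux is that the leading asymptotic term $\frac{\sinh\sqrt\mu}{\sqrt\mu}$ is potential-independent, which is exactly what removes that ambiguity and fixes the Hadamard constant at $1$; I expect verifying this universal leading behaviour from the Volterra representation to be the main obstacle, though it is standard.
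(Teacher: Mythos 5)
Your proposal is correct and follows essentially the same route as the paper: both directions rest on identifying the zeros of the characteristic functions with minus the Dirichlet spectra, applying Hadamard factorization to these order-$\frac{1}{2}$ entire functions, and using the potential-independent asymptotics $\Delta_{g,V}(\mu)\sim\sinh(\sqrt{\mu})/\sqrt{\mu}$ to fix the multiplicative constant. The only differences are cosmetic: you derive the identity $\Delta_{g,V}(\mu)=s_0(1,\mu)$ and sketch the Volterra iteration for the asymptotics, whereas the paper simply cites these facts from P\"oschel--Trubowitz and \cite{DKN2}, and you handle a possible zero at $\mu=0$ directly rather than invoking Remark \ref{Utile2} to exclude it.
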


\begin{proof}
  We recall first from Remark \ref{Utile1} that the FSS $(c_j(x,\mu), s_j(x,\mu)), \ j=1,2$ are entire of order $\frac{1}{2}$ with respect to $\mu$. Hence we deduce easily from (\ref{Char}) that $\Delta_{g,V}, \Delta_{g,\tilde{V}}$ are also entire of order $\frac{1}{2}$. Moreover, we know from Remark \ref{Utile2} that $0$ is not a zero of $\Delta_{g,V}$ and $\Delta_{g,\tilde{V}}$. It follows then from the Hadamard factorization Theorem (see for instance \cite{Lev}) that
\begin{equation} \label{r1}
  \Delta_{g,V}(\mu) = C \prod_{k \geq 1} \left( 1 - \frac{\mu}{\alpha_k} \right), \quad \Delta_{g,\tilde{V}}(\mu) = \tilde{C} \prod_{k \geq 1} \left( 1 - \frac{\mu}{\tilde{\alpha_k}} \right),
\end{equation}
where $(\alpha_k)_{k \geq 1}, \ (\tilde{\alpha_k})_{k \geq 1}$ denote "minus" the Dirichlet spectra of the 1D Schr\"odinger operators $-\frac{d^2}{dx^2} + [q_f + (V-\lambda) f^4]$ and $-\frac{d^2}{dx^2} + [q_f + (\tilde{V}-\lambda) f^4]$ respectively (see Remark \ref{Utile1} again) and $C, \tilde{C}$ are constants.

Second, it turns out that $\Delta_{g,V}$ and $\Delta_{g,\tilde{V}}$ have universal asymptotics when $\mu \to \infty$. Precisely, we know from \cite{PT} and \cite{DKN2}, Corollary 2.1 that
\begin{equation} \label{r2}
  \Delta_{g,V}(\mu), \, \Delta_{g,\tilde{V}}(\mu) \sim \frac{\sinh(\sqrt{\mu})}{\sqrt{\mu}}, \quad \mu \to \infty.
\end{equation}

As a consequence, we deduce from (\ref{r1}) that if $\Delta_{g,V}(\mu) = \Delta_{g,\tilde{V}}(\mu)$ for all $\mu \in \C$, then $\alpha_k = \tilde{\alpha_k}$ for all $k \geq 1$. This means precisely that the potentials $q_f + (V-\lambda)f^4$ and $q_f + (\tilde{V}-\lambda)f^4$ are isospectral for the boundary value problem (\ref{Eq3}). Conversely, if we assume that $q_f + (V-\lambda)f^4$ and $q_f + (\tilde{V}-\lambda)f^4$ are isospectral for (\ref{Eq3}), then $\alpha_k = \tilde{\alpha_k}$ for all $k \geq 1$. This means using (\ref{r1}) that $\Delta_{g,V}(\mu) = \frac{C}{\tilde{C}} \Delta_{g,\tilde{V}}(\mu)$ for all $\mu \in \C$. But the universal asymptotics (\ref{r2}) imply then that $C = \tilde{C}$. Hence $\Delta_{g,V} = \Delta_{g,\tilde{V}}$.
\end{proof}

Thanks to the fundamental results of P\"oschel and Trubowitz \cite{PT}, Theorem 5.2, we have a complete description of the class of isospectral potentials for the Schr\"odinger operator with Dirichlet boundary conditions (\ref{Eq3}). This result shows that for each eigenfunction $\phi_k, \ k \geq 1$ of (\ref{Eq3}), we can find a one parameter family of explicit potentials  isospectral to $Q(x) = q_f + (V-\lambda) f^4 \in L^2([0,1])$ by the formula
\begin{equation} \label{Iso1}
  Q_{k,t}(x) = Q(x) - 2 \frac{d^2}{dx^2} \log \theta_{k,t}(x), \quad \quad \forall t \in \R,
\end{equation} 	
where
\begin{equation} \label{Iso2}
  \theta_{k,t}(x) = 1 + (e^t - 1) \int_x^1 \phi_k^2(s) ds.
\end{equation} 		
Using the definition $Q(x) = q_f + (V-\lambda) f^4$, we get the explicit one parameter families of potentials $\tilde{V}$
\begin{equation} \label{IsoPot}
  \tilde{V}_{k,t}(x) = V(x) - \frac{2}{f^4(x)} \frac{d^2}{dx^2} \log \theta_{k,t}(x), \quad \forall k \geq 1, \quad \forall t \in \R,
\end{equation}
where $\theta_{k,t}$ is given by (\ref{Iso2}). Using Lemma \ref{Link-Iso} and (\ref{DN2}), we have proved

\begin{thm} \label{NonUniquenessQ3}
Let $(M,g)$ be a cylindrical warped product as in (\ref{Metric}), $V=V(x) \in L^\infty(M)$ and $\lambda \in \R$ not belong to the Dirichlet spectrum of $-\triangle_g + V$. Then the family of potentials $\tilde{V}_{k,t}$ defined in (\ref{IsoPot}) for all $k \geq 1$ and $t \in \R$ satisfies
$$
  \Lambda_{g,V,\Gamma_D,\Gamma_N}(\lambda) = \Lambda_{g,\tilde{V}_{k,t},\Gamma_D,\Gamma_N}(\lambda),
$$
whenever $\Gamma_D$ and $\Gamma_N$ are open sets that belong to different connected components of $\partial M$.
\end{thm}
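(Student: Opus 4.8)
The plan is to read the statement directly off the block structure of the DN map, combined with the isospectral deformation supplied by P\"oschel--Trubowitz. The starting observation, which is also the reason measuring on distinct connected components is indispensable, is that in the matrix representation (\ref{DN-Partiel}) the partial DN map with $\Gamma_D \subset \Gamma_0$ and $\Gamma_N \subset \Gamma_1$ is governed \emph{solely} by the anti-diagonal block, whose only non-fixed ingredient is the characteristic function $\Delta_{g,V}$; this is exactly the content of (\ref{DN2}). By contrast, the diagonal blocks carry the Weyl--Titchmarsh functions $M_{g,V}$ and $N_{g,V}$, which are precisely the quantities that would register a change of potential, and these are invisible to measurements on disjoint components. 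Hence it suffices to produce potentials $\tilde V \ne V$ with $\Delta_{g,\tilde V} = \Delta_{g,V}$ on all of $\C$.

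First I would recognize that the family (\ref{IsoPot}) is engineered exactly so that the transformed potential $q_f + (\tilde V_{k,t}-\lambda)f^4$ coincides with the P\"oschel--Trubowitz potential $Q_{k,t}$ of (\ref{Iso1}) built from $Q = q_f + (V-\lambda)f^4$ and its $k$-th $L^2$-normalized Dirichlet eigenfunction $\phi_k$ (to avoid a clash, the deformation index $k$ in (\ref{IsoPot}) is kept distinct from the harmonic index). By Theorem 5.2 of \cite{PT}, $Q_{k,t}$ is isospectral to $Q$ for the Dirichlet problem (\ref{Eq3}), for every $t \in \R$. A minor side check is that $\tilde V_{k,t} \in L^\infty(M)$: since $0 \le \int_x^1 \phi_k^2\, ds \le 1$, the weight $\theta_{k,t}$ stays smooth and strictly positive, with $\theta_{k,t} \ge \min(1,e^t) > 0$, so $\frac{d^2}{dx^2}\log\theta_{k,t}$ is bounded, and dividing by $f^4 \ge \min f^4 > 0$ preserves boundedness.

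Next I would invoke Lemma \ref{Link-Iso}: isospectrality of $Q$ and $Q_{k,t}$ for (\ref{Eq3}) is equivalent to $\Delta_{g,V}(\mu) = \Delta_{g,\tilde V_{k,t}}(\mu)$ for all $\mu \in \C$. Before concluding, I would check that $\Lambda_{g,\tilde V_{k,t}}(\lambda)$ is even well defined, i.e. that $\lambda$ avoids the Dirichlet spectrum of $-\triangle_g + \tilde V_{k,t}$. By Remark \ref{Utile2}, the hypothesis $\lambda \notin \sigma(-\triangle_g + V)$ is equivalent to $\Delta_{g,V}(\mu_j) \ne 0$ for every eigenvalue $\mu_j$ of $-\triangle_K$; since $\Delta_{g,\tilde V_{k,t}} = \Delta_{g,V}$, the same non-vanishing holds for $\tilde V_{k,t}$, so the perturbed DN map exists.

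Finally I would substitute the equality of characteristic functions into (\ref{DN2}). Because the metric $g$ is unchanged, the boundary weights $f^{n-2}(0)$ and $f^n(1)$ are identical for $V$ and $\tilde V_{k,t}$, and the only remaining dependence is through $\Delta_{g,V}(\mu_j) = \Delta_{g,\tilde V_{k,t}}(\mu_j)$ on each harmonic $Y_j$; hence $\Lambda_{g,V,\Gamma_D,\Gamma_N}(\lambda) = \Lambda_{g,\tilde V_{k,t},\Gamma_D,\Gamma_N}(\lambda)$. The case $\Gamma_D \subset \Gamma_1$, $\Gamma_N \subset \Gamma_0$ is symmetric, using the other anti-diagonal entry of (\ref{DN-Partiel}). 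I do not expect a serious obstacle here: all the analytic weight is carried by Lemma \ref{Link-Iso} and the cited P\"oschel--Trubowitz theorem, and the only point demanding genuine care is the conceptual one flagged at the outset, namely that the deformation alters the diagonal Weyl--Titchmarsh data $M_{g,V}, N_{g,V}$ but leaves the anti-diagonal $\Delta_{g,V}$ untouched, so that uniqueness fails precisely when $\Gamma_D$ and $\Gamma_N$ lie in different connected components of $\partial M$.
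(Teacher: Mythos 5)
Your proposal is correct and follows essentially the same route as the paper: it reads off from (\ref{DN2}) that the partial DN map for data on distinct connected components depends only on the characteristic function $\Delta_{g,V}$, invokes Lemma \ref{Link-Iso} to reduce equality of characteristic functions to isospectrality for (\ref{Eq3}), and then uses the P\"oschel--Trubowitz family (\ref{Iso1})--(\ref{Iso2}) to produce the potentials $\tilde{V}_{k,t}$ of (\ref{IsoPot}). The additional checks you include (boundedness of $\tilde{V}_{k,t}$ and well-definedness of the perturbed DN map via non-vanishing of $\Delta_{g,\tilde{V}_{k,t}}$ at the $\mu_j$) are sound and consistent with Remarks \ref{Utile2} and \ref{Rem-Iso}.
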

We emphasize that the non-uniqueness result of the Theorem holds when $\Gamma_D = \Gamma_0$ and $\Gamma_N = \Gamma_1$, hence when $\overline{\Gamma_D \cup \Gamma_N} = \partial M$.

\begin{rem} \label{Rem-Iso}
\begin{itemize}
\item The potentials $\tilde{V}_{k,t}$ have the same regularity properties as $V$ on $[0,1]$ for all $k \geq 1$ and for all $t \in \R$. Indeed, the normalized eigenfunctions $\phi_k(x)$ are smooth on $[0,1]$ by elliptic regularity. Hence, the functions $\theta_{k,t}$ are also smooth and never vanish on $[0,1]$ for all $k \geq 1$ and for all $t \in \R$ by (\ref{Iso2}). In particular, if $V$ is smooth on $[0,1]$, then $\tilde{V}_{k,t}$ is also smooth by (\ref{IsoPot}).

\item For all $k \geq 1$ and for all $t \in \R$, $\tilde{V}_{k,t}(0) = V(0)$ and $\tilde{V}_{k,t}(1) = V(1)$. This follows from a short calculation using (\ref{Iso2}) and (\ref{IsoPot}).

\item If moreover $V > 0$ (resp. $V<0$), then for all $k \geq 1$, there exists $T_k > 0$ such that $\tilde{V}_{k,t} >0$ (resp. $\tilde{V}_{k,t} < 0$) for all $-T_k < t < T_k$. Indeed, it is clear that for a fixed $k \geq 1$, the function $2 \frac{d^2}{dx^2} \log \theta_{k,t}(x)$ can be made arbitrarily small as $t \to 0$ uniformly w.r.t. $x \in [0,1]$. The result follows thanks to (\ref{IsoPot}).

\end{itemize}
\end{rem}

\begin{rem} \label{WT-vs-Char}
The preceding construction fails when $\Gamma_D, \Gamma_N$ belong to the same connected component of the boundary $\partial M$. This is due to the fact that on each harmonic $Y_k$, the associated partial DN map $\Lambda_{g,\Gamma_D,\Gamma_N}(\lambda)$ acts essentially as an operator of multiplication by the Weyl-Titchmarsh functions $M_{g,V}(\mu_k)$ or $N_{g,V}(\mu_k)$ (see (\ref{DN2})) instead of the characteristic function $\Delta_{g,V}(\mu_k)$. But as it is well known in 1D inverse spectral theory, the Weyl-Titchmarsh functions contain much more information than the characteristic function. This is the object of the Borg-Marchenko Theorem (see \cite{Be, Bo1, Bo2, ET, FY, GS, KST}). In particular, for rotationally invariant toric cylinders of dimensions 2 and 3, we showed in (\cite{DKN2}, Theorems 3.4 and 4.6), that if $\Gamma_D$ and $\Gamma_N$ belong to the same connected component of the boundary $\partial M$ {\footnote{with a technical assumption on the size of $\Gamma_N$}}, then $\Lambda_{g,V,\Gamma_D,\Gamma_N}(\lambda) = \Lambda_{g,\tilde{V},\Gamma_D,\Gamma_N}(\lambda)$ implies $V = \tilde{V}$.
\end{rem}

\Section{Counterexamples to uniqueness for the anisotropic Calder\'on problem with disjoint data in dimension $n\geq 3$, modulo the gauge invariance} \label{3}

In this Section, we show that the counterexamples to uniqueness given in Theorem \ref{NonUniquenessQ3} for the anisotropic Calder\'on problem \textbf{(Q3)} lead to non trivial counterexamples to uniqueness for the anisotropic Calder\'on problem \textbf{(Q2)} in dimension $n\geq 3$ modulo the gauge invariance introduced in Section \ref{1}, Definition \ref{Gauge0}. To do this, we have in mind Proposition \ref{Link-c-to-V} which gives a clear link between the anisotropic Calder\'on problems \textbf{(Q2)} and \textbf{(Q3)} when $\Gamma_D \cap \Gamma_N = \emptyset$.

More precisely, we fix $(M,g)$ a cylindrical warped product as in (\ref{Metric}), $V=V(x) \in C^\infty(M)$ and $\lambda \in \R$ not belonging to the Dirichlet spectrum of $-\triangle_g + V$. Given a potential $\tilde{V}$ given by (\ref{IsoPot}), we would like to try to construct conformal factors $c$ and $\tilde{c}$ in such a way that (see (\ref{Vgc}) for the notations)
$$
  V_{g,c,\lambda} = V, \quad V_{g,\tilde{c},\lambda} = \tilde{V},
$$
and
$$
  c, \tilde{c} = 1 \ \textrm{on} \ \Gamma_D \cup \Gamma_N.
$$
If we manage to construct such conformal factors $c$ and $\tilde{c}$, then Theorem \ref{NonUniquenessQ3} and Proposition \ref{Link-c-to-V} would imply immediately that
$$
  \Lambda_{c^4 g, \Gamma_D, \Gamma_N}(\lambda) = \Lambda_{\tilde{c}^4 g, \Gamma_D, \Gamma_N}(\lambda)
$$
whenever $\Gamma_D \cap \Gamma_N = \emptyset$. Moreover, the metrics $c^4 g$ and $\tilde{c}^4 g$ wouldn't be gauge related in the sense of Definition \ref{Gauge0} since they are associated to different potentials $V \ne \tilde{V}$ (see Lemma \ref{lemmafactor} and the paragraph just after).

Considering only the problem of finding $c > 0$ satisfying $V_{g,c,\lambda} = V$, $c = 1$ on $\Gamma_D \cup \Gamma_N$, we see from (\ref{Vgc}) that is sufficient to find a smooth positive solution $w$ of the nonlinear Dirichlet problem
\begin{equation} \label{DirichletPb-q}
  \left\{ \begin{array}{rl} \triangle_g w + (\lambda - V)w - \lambda w^{\frac{n+2}{n-2}} & = 0, \ \textrm{on} \ M, \\
	                          w & = \eta, \ \textrm{on} \ \partial M,
	\end{array} \right. 														
\end{equation}
where $\eta = 1$ on $\Gamma_D \cup \Gamma_N$ and $0\eta >0$ on $\partial M$.

For zero frequency $\lambda = 0$, the nonlinear Dirichlet problem (\ref{DirichletPb-q}) becomes linear, so that the usual existence and uniqueness Theorem for a Dirichlet problem on a Riemannian manifold with boundary as well as the strong maximum principle can be used to prove

\begin{prop}[Zero frequency] \label{q-to-c-0}
  Assume that $\lambda = 0$ and $V \geq 0$ on $M$. Then for each positive smooth function $\eta$ on $\partial M$ such that $\eta = 1$ on $\Gamma_D \cup \Gamma_N$, there exists a unique smooth positive solution $w$ of (\ref{DirichletPb-q}) such that $0 <  w \leq \max \eta$ on $M$.
\end{prop}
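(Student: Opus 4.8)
The plan is to exploit the fact that, at zero frequency, the nonlinear term $-\lambda w^{\frac{n+2}{n-2}}$ in (\ref{DirichletPb-q}) disappears, so that the problem reduces to the \emph{linear} Dirichlet boundary value problem
\[
  \triangle_g w - V w = 0 \ \textrm{on} \ M, \qquad w = \eta \ \textrm{on} \ \partial M.
\]
First I would establish existence and uniqueness of a weak solution. Since $V \geq 0$ and $\triangle_g$ is the nonpositive Laplace--Beltrami operator, the Dirichlet realization of $-\triangle_g + V$ is a positive operator whose lowest eigenvalue is strictly positive (even when $V \equiv 0$ one has $\lambda_1 > 0$); hence $0$ does not belong to its Dirichlet spectrum. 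Standard elliptic theory then provides a unique weak solution $w \in H^1(M)$ with trace $\eta$ on $\partial M$, and uniqueness for the full linear problem is immediate because the difference of two solutions solves the homogeneous problem with zero boundary data and therefore vanishes. Smoothness of $\eta$ and $V$ together with elliptic regularity up to the boundary then upgrades $w$ to $C^\infty(\overline{M})$.

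Next I would derive the two-sided bound $0 < w \leq \max \eta$ entirely from the maximum principle, writing $L := \triangle_g - V$ and noting that its zeroth-order coefficient is $-V \leq 0$, which is precisely the sign for which the weak and strong maximum principles of \cite{GT} apply. For the upper bound, set $v := w - \max \eta$; since $\max \eta$ is constant one computes $L v = V \max \eta \geq 0$ on $M$, while $v = \eta - \max \eta \leq 0$ on $\partial M$, so the weak maximum principle forces $v \leq 0$, i.e. $w \leq \max \eta$. For positivity, applying the weak maximum principle to $-w$ (using $\eta > 0$ on $\partial M$) gives $w \geq 0$, and the strong maximum principle then excludes an interior zero: if $w$ vanished at an interior point it would attain a nonpositive interior minimum and hence be constant, contradicting $w = \eta > 0$ on $\partial M$. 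This yields $w > 0$ on $\overline{M}$, so the unique solution constructed above satisfies $0 < w \leq \max \eta$, as claimed.

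The existence, uniqueness and regularity step is routine, as are the constant-function computations for $L v$; the only point requiring genuine care --- and the one I would flag --- is the bookkeeping of signs, namely that with the analysts' convention $\triangle_g \leq 0$ the operator $L = \triangle_g - V$ has nonpositive zeroth-order term, so that the maximum principles apply in exactly the form stated in \cite{GT}. Once this is settled, both the upper and lower bounds follow directly, without any recourse to the lower- and upper-solution machinery that is needed in the genuinely nonlinear regime $\lambda \neq 0$.
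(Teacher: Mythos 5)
Your proof is correct and follows essentially the same route the paper indicates (it gives only a one-sentence sketch before the proposition): at $\lambda=0$ the problem is linear, $0$ is not a Dirichlet eigenvalue of $-\Delta_g+V$ since $V\geq 0$ and $\lambda_1>0$, so existence, uniqueness and smoothness are standard, and the weak and strong maximum principles (with the correct sign observation that $L=\Delta_g-V$ has nonpositive zeroth-order coefficient) give $0<w\leq \max\eta$. Your write-up simply supplies the details the paper leaves implicit.
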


We now turn to the case of  frequency $\lambda \in \R$, and prove the following:

\begin{prop}[general case] \label{q-to-c-1}
  1. If $\lambda >0$  and  $0 <V(x) <\lambda$ on $M$, then for each positive function $\eta$ on $\partial M$ such that $\max \eta \geq 1$ on $\partial M$, there exists a smooth positive solution $w$ of (\ref{DirichletPb-q}). \\
	2. If $\lambda \leq 0$ and $V(x) \geq 0$ on $M$, then for each for each positive function $\eta$ on $\partial M$ such that $\eta \leq 1$ on $\partial M$, there exists a smooth positive solution $w$ of (\ref{DirichletPb-q}). \\
	
\end{prop}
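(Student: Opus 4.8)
The plan is to solve the nonlinear Dirichlet problem (\ref{DirichletPb-q}) by the method of lower and upper solutions recalled after (\ref{GeneralDP}), writing it in the form $\Delta_g w + f(x,w) = 0$ with $f(x,w) = (\lambda - V(x))\,w - \lambda\, w^{\frac{n+2}{n-2}}$ and $w = \eta$ on $\partial M$. Once I exhibit a lower solution $\underline{w}$ and an upper solution $\overline{w}$ with $\underline{w} \le \overline{w}$ on $M$, the cited existence theorem (\cite{Sat}, \cite{Ta2}), as used in Proposition \ref{NonlinearDirichletPb}, produces a solution $w \in C^\infty(\overline{M})$ with $\underline{w} \le w \le \overline{w}$; smoothness follows from elliptic regularity since $f$ and $\eta$ are smooth, and the required positivity of $w$ is automatic as soon as $\underline{w}$ is chosen strictly positive. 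Both cases exploit that $\Delta_g$ annihilates constants, so that for a constant $a$ the differential inequality reduces to the sign of $f(x,a) = a\big[(\lambda - V(x)) - \lambda\, a^{\frac{4}{n-2}}\big]$.

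For the first case ($\lambda > 0$ and $0 < V < \lambda$ on the compact manifold $M$, so that $0 < V_{\min} \le V \le V_{\max} < \lambda$), I would take $\overline{w} = \max \eta$ and $\underline{w} = a$ for a small constant $a > 0$. Since $\max \eta \ge 1$ gives $(\max\eta)^{\frac{4}{n-2}} \ge 1$, one finds $f(x,\max\eta) \le \max\eta\,[(\lambda - V) - \lambda] = -V\,\max\eta < 0$, so $\overline{w}$ is an upper solution and $\overline{w}_{|\partial M} = \max\eta \ge \eta$. For the lower solution, because $\lambda - V \ge \lambda - V_{\max} > 0$ is bounded below away from zero, choosing $a$ small (and $a \le \min \eta$) makes $(\lambda - V) - \lambda\, a^{\frac{4}{n-2}} > 0$ uniformly in $x$, whence $f(x,a) > 0$ and $\underline{w} = a$ is a strictly positive lower solution; the ordering $a \le \max\eta$ is immediate.

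For the second case ($\lambda \le 0$, $V \ge 0$, $\eta \le 1$) I would take the constant $\overline{w} = 1$, for which $f(x,1) = (\lambda - V) - \lambda = -V \le 0$ and $\overline{w}_{|\partial M} = 1 \ge \eta$, so it is an upper solution. For the lower solution I would let $\underline{w}$ solve the linear Dirichlet problem $\Delta_g \underline{w} + (\lambda - V)\underline{w} = 0$ on $M$ with $\underline{w} = \eta$ on $\partial M$. Since $V - \lambda \ge 0$, the operator $-\Delta_g + (V - \lambda)$ is coercive on $H^1_0(M)$, hence invertible, and the strong maximum principle (\cite{GT}) yields $0 < \underline{w} \le \max\eta \le 1$. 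Substituting back gives $\Delta_g \underline{w} + f(x,\underline{w}) = -\lambda\, \underline{w}^{\frac{n+2}{n-2}} \ge 0$ because $\lambda \le 0$, so $\underline{w}$ is a strictly positive lower solution, and $\underline{w} \le 1 = \overline{w}$ provides the ordering.

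The computations are elementary; the two points I expect to require care, and would spell out, are the following. In the first case the delicate step is the \emph{uniformity}: compactness of $M$ together with the strict pointwise inequality $V < \lambda$ is what gives a positive lower bound on $\lambda - V$, so that a single constant $a$ serves as a lower solution at every point simultaneously. In the second case the subtle interplay is between the boundary hypothesis $\eta \le 1$ and the sign condition $\lambda \le 0$: the former forces $\underline{w} \le \max\eta \le 1 = \overline{w}$, securing the ordering, while the latter is precisely what turns the linear solution into a lower solution through the nonnegative sign of $-\lambda\, \underline{w}^{\frac{n+2}{n-2}}$. Neither case requires $\lambda$ to avoid the Dirichlet spectrum, since the sub/supersolution scheme does not rely on it.
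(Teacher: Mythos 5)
Your proposal is correct and uses the paper's own method of lower and upper solutions; Case 1 is essentially identical to the paper's argument (constant lower solution $\underline{w}=\epsilon$ with $\epsilon$ small, constant upper solution $\overline{w}=\max\eta$, compactness of $M$ supplying the uniform bound $\lambda - V \geq \lambda - \max V > 0$ that makes a single small constant work everywhere). In Case 2 your lower solution is the same as the paper's (the solution of the linear problem $\Delta_g \underline{w} + (\lambda - V)\underline{w} = 0$ with boundary value $\eta$, positive and bounded by $\max\eta$ by the strong maximum principle), but your upper solution is genuinely simpler: the constant $\overline{w} = 1$, for which $\Delta_g \overline{w} + (\lambda-V)\overline{w} - \lambda \overline{w}^{\frac{n+2}{n-2}} = -V \leq 0$ and $1 \geq \eta$ hold trivially, and the ordering $\underline{w} \leq \max\eta \leq 1 = \overline{w}$ follows at once. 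The paper instead builds $\overline{w}$ by solving the inhomogeneous linear problem $\Delta_g \overline{w} + (\lambda - V)\overline{w} = (\lambda - V)(\max\eta)^{\frac{n+2}{n-2}}$ with boundary value $\eta$, and then needs three further maximum-principle arguments to check $0 \leq \overline{w} \leq \max\eta$, that $\overline{w}$ is an upper solution, and that $\overline{w} \geq \underline{w}$; your constant barrier short-circuits all of that at no cost in generality. Your closing observation that the sub/supersolution scheme does not require $\lambda$ to avoid the Dirichlet spectrum is also accurate.
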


\begin{proof}
1. We use again the technique of lower and upper solutions. We define ${\underline{w}}= \epsilon$ where $\epsilon>0$ is small enough. We have
\begin{equation}
\Delta_g \underline{w} + (\lambda-V) \underline{w}  -\lambda (\underline{w})^{\frac{n+2}{n-2}} =
\epsilon \left( (\lambda-V) -\lambda \epsilon^{{\frac{n+2}{n-2}} -1} \right) >0,
\end{equation}
so $\underline{w}$ is a lower solution. In the same way, we define $\overline{w} = \max \eta$ and we have
\begin{equation}
\Delta_g \overline{w} + (\lambda-V) \overline{w}  -\lambda (\overline{w})^{\frac{n+2}{n-2}} =
\lambda ( \max \eta - \max \eta^{\frac{n+2}{n-2}} ) - V \max \eta \leq 0.
\end{equation}
It follows that $\overline{w}$ is an upper solution and clearly $\underline{w} \leq \overline{w}$.

\vspace{0.2cm}
2. In the case $\lambda \leq 0$, $V \geq 0$ and $\eta \leq 1$, we define ${\underline{w}}$ as the unique solution of the Dirichlet problem
\begin{equation} \label{Dir4}
  \left\{ \begin{array}{cc} \Delta_g \underline{w} + (\lambda-V) \underline{w} = 0 , & \textrm{on} \ M, \\ \underline{w} = \eta, & \textrm{on} \ \partial M. \end{array} \right.
\end{equation}
Since $(\lambda - V) \leq 0$, the strong maximum principle implies that $0 < \underline{w} \leq \max \eta$ on $M$. Moreover, $\triangle_g \underline{w} + (\lambda - V) \underline{w} - \lambda (\underline{w})^{\frac{n+2}{n-2}} = -\lambda (\underline{w})^{\frac{n+2}{n-2}} \geq 0$. Hence $\underline{w}$ is a lower solution of (\ref{DirichletPb-q}).

\vspace{0.2cm} \noindent
Now, we define ${\overline{w}}$ as the unique solution of the Dirichlet problem
\begin{equation} \label{Dir5}
  \left\{ \begin{array}{cc} \Delta_g \overline{w} + (\lambda - V) \overline{w} = (\lambda-V) (\max \eta)^{\frac{n+2}{n-2}} , & \textrm{on} \ M, \\ \overline{w} = \eta, & \textrm{on} \ \partial M. \end{array} \right.
\end{equation}
According to the maximum principle, we also have $\overline{w} \geq 0$ on $M$. Setting $v = \overline{w}- \max \eta$, we see that
\begin{equation}
\Delta_g v + (\lambda-V) v = (\lambda-V)(\max \eta^{\frac{n+2}{n-2}} - \max \eta)  \geq 0,
\end{equation}
since $\eta \leq 1$. Hence, the maximum principle implies that $v \leq 0$ on $M$, or equivalently $ \overline{w} \leq \max \eta$.

\vspace{0.2cm}

We deduce that
\begin{equation}
\Delta_g \overline{w} + (\lambda - V) \overline{w} - \lambda \overline{w}^{\frac{n+2}{n-2}} =
(\lambda -V) (\max \eta^{\frac{n+2}{n-2}} - \overline{w}^{\frac{n+2}{n-2}}) - V \overline{w}^{\frac{n+2}{n-2}} \leq 0,
\end{equation}
since $V$ is positive. Thus, $\overline{w}$ is an upper solution of (\ref{DirichletPb-q}).

Finally, $\overline{w} - \underline{w}$ satisfies
\begin{equation} \label{Dir6}
  \left\{ \begin{array}{cc} \Delta_g (\overline{w} - \underline{w}) + (\lambda - V) (\overline{w} - \underline{w}) = (\lambda-V) (\max \eta)^{\frac{n+2}{n-2}} < 0 , & \textrm{on} \ M, \\ \overline{w} - \underline{w} = 0, & \textrm{on} \ \partial M. \end{array} \right.
\end{equation}
Then, the maximum principle implies again $\overline{w} \geq \underline{w}$. Then according to the lower and upper solutions technique, there exists a smooth positive solution $w$ of (\ref{DirichletPb-q}) such that $w \ne 1$ on $M$. \\

\end{proof}

\vspace{0.2cm}
Let us now come back to the geometric setting of Theorem \ref{NonUniquenessQ3}. Here $M = [0,1] \times K$ is equipped with a warped product metric $g$ as in  (\ref{Metric}). First, let us fix a frequency $\lambda \in \R$. 

\vspace{0.2cm}
1. Assume that $\lambda >0$. Consider a potential $V = V(x) \in C^\infty(M)$ such that $0<V(x)<\lambda$ and such that $\lambda$ does not belong to the Dirichlet spectrum of $-\Delta_g +V$. This is always possible since the discrete spectrum of $-\Delta_g + V$ is unstable under small perturbations of $V$. Now, consider a potential $\tilde{V} = \tilde{V}_{k,t}(x)$ as in (\ref{IsoPot}) and such that $0<\tilde{V}(x) < \lambda$. Observe that this can always been achieved for small enough $-\epsilon < t < \epsilon$ thanks to the definition (\ref{IsoPot}) of $\tilde{V}_{k,t}$ (see Remark \ref{Rem-Iso}). At last, consider a smooth positive function $\eta$ on $\partial M$ such that $\eta =1$ on $\Gamma_D \cup \Gamma_N$ and such that $\max \eta \geq 1$. Then, Proposition \ref{q-to-c-1} implies the existence of smooth positive conformal factors $c$ and $\tilde{c}$ such that
$$
  V_{g,c,\lambda} = V, \quad c = 1 \ \textrm{on} \ \Gamma_D \cup \Gamma_N,
$$
and
$$
  V_{g,\tilde{c},\lambda} = \tilde{V}, \quad \tilde{c} = 1 \ \textrm{on} \ \Gamma_D \cup \Gamma_N.
$$
But from Theorem \ref{NonUniquenessQ3}, we have
$$
  \Lambda_{g, V, \Gamma_D, \Gamma_N}(\lambda) = \Lambda_{g, \tilde{V}, \Gamma_D, \Gamma_N}(\lambda).
$$
Therefore from Proposition \ref{Link-c-to-V}, we conclude that
$$
  \Lambda_{c^4 g, \Gamma_D, \Gamma_N}(\lambda) = \Lambda_{\tilde{c}^4 g, \Gamma_D, \Gamma_N}(\lambda).
$$

\vspace{0.2cm}
2. Assume that $\lambda \leq 0$. Consider a potential $V(x)>0$ and a smooth positive function $\eta$ on $\partial M$ such that $\eta =1$ on $\Gamma_D \cup \Gamma_N$ and such that $\eta \leq 1$. Clearly, $\lambda$ does not belong to the Dirichlet spectrum of $-\Delta_g +V$. Then, we follow the same stategy as in the previous case.

\vspace{0.2cm}

We emphasize that the metrics $c^4 g$ and $\tilde{c}^4 g$ aren't connected by the invariance gauge of Section \ref{1} since they correspond to different potentials $V = V_{g,c,\lambda}$ and $\tilde{V} = V_{g,\tilde{c},\lambda}$. Hence we have constructed a large class of counterexamples to uniqueness for the anisotropic Calder\'on problem when the Dirichlet and Neumann data are measured on disjoint sets of the boundary \emph{modulo this gauge invariance}.

Therefore we have proved:

\begin{thm} \label{NonUniquenessQ4}
  Let $M = [0,1] \times K$ be a cylindrical manifold having two ends equipped with a warped product metric $g$ as in (\ref{Metric}). Let $\Gamma_D, \Gamma_N$ be open sets that belong to different connected components of $\partial M$. Let $\lambda \in \R$ be a fixed frequency.  Then there exists an infinite number of smooth positive conformal factors $c$ and $\tilde{c}$ on $M$ with aren't gauge equivalent in the sense of Definition \ref{Gauge0} such that
$$
  \Lambda_{c^4 g, \Gamma_D, \Gamma_N}(\lambda) = \Lambda_{\tilde{c}^4 g, \Gamma_D, \Gamma_N}(\lambda).
$$	
\end{thm}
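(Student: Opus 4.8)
The plan is to chain together the three tools already assembled in the preceding sections: the link between problems \textbf{(Q2)} and \textbf{(Q3)} (Proposition \ref{Link-c-to-V}), the isospectral non-uniqueness for \textbf{(Q3)} on cylinders with two ends (Theorem \ref{NonUniquenessQ3}), and the solvability of the nonlinear Dirichlet problem (\ref{DirichletPb-q}) that realizes a prescribed potential as some $V_{g,c,\lambda}$ (Proposition \ref{q-to-c-1}). Since $\Gamma_D$ and $\Gamma_N$ lie in different connected components of $\partial M$, they are automatically disjoint, so the hypotheses of Proposition \ref{Link-c-to-V} become available as soon as the conformal factors we produce equal $1$ on $\Gamma_D \cup \Gamma_N$.

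First I would fix the frequency $\lambda$ and select a background potential $V = V(x) \in C^\infty(M)$ together with boundary data $\eta$ so that the sign hypotheses of Proposition \ref{q-to-c-1} hold: for $\lambda > 0$ take $0 < V < \lambda$ and $\eta$ with $\max \eta \geq 1$, while for $\lambda \leq 0$ take $V > 0$ and $\eta \leq 1$, in both cases with $\eta = 1$ on $\Gamma_D \cup \Gamma_N$. One keeps $\lambda$ out of the Dirichlet spectrum of $-\Delta_g + V$ by a small perturbation of $V$, since that spectrum is discrete and varies continuously. Next I would invoke Theorem \ref{NonUniquenessQ3} to produce, for each $k \geq 1$ and each sufficiently small nonzero $t$, an isospectral partner $\tilde{V} = \tilde{V}_{k,t}$ given by (\ref{IsoPot}), with $\tilde{V} \neq V$ but satisfying the \emph{same} sign constraints; the third item of Remark \ref{Rem-Iso}, together with the uniform smallness of the correction term as $t \to 0$, guarantees that the relevant inequalities survive for $|t|$ small. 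Moreover, since $V$ and $\tilde{V}$ are isospectral in the sense of Lemma \ref{Link-Iso}, $\lambda$ also avoids the Dirichlet spectrum of $-\Delta_g + \tilde{V}$, so no spectral obstruction arises for the partner. Theorem \ref{NonUniquenessQ3} then yields $\Lambda_{g,V,\Gamma_D,\Gamma_N}(\lambda) = \Lambda_{g,\tilde{V},\Gamma_D,\Gamma_N}(\lambda)$.

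Then I would apply Proposition \ref{q-to-c-1} twice to obtain smooth positive conformal factors $c$ and $\tilde{c}$, both equal to $1$ on $\Gamma_D \cup \Gamma_N$, solving $V_{g,c,\lambda} = V$ and $V_{g,\tilde{c},\lambda} = \tilde{V}$ respectively. Feeding these into Proposition \ref{Link-c-to-V} in the case $\Gamma_D \cap \Gamma_N = \emptyset$ gives $\Lambda_{c^4 g,\Gamma_D,\Gamma_N}(\lambda) = \Lambda_{g,V,\Gamma_D,\Gamma_N}(\lambda)$ and $\Lambda_{\tilde{c}^4 g,\Gamma_D,\Gamma_N}(\lambda) = \Lambda_{g,\tilde{V},\Gamma_D,\Gamma_N}(\lambda)$, and composing with the equality from the previous step produces the desired identity $\Lambda_{c^4 g,\Gamma_D,\Gamma_N}(\lambda) = \Lambda_{\tilde{c}^4 g,\Gamma_D,\Gamma_N}(\lambda)$.

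The point that requires real care, rather than being a deep obstacle, is to certify that these counterexamples are \emph{non-trivial}, i.e.\ not related by the gauge of Definition \ref{Gauge0}. Here I would appeal to Lemma \ref{lemmafactor}: two gauge-related metrics $c^4 g$ and $\tilde{c}^4 g$ necessarily satisfy $V_{g,c,\lambda} = V_{g,\tilde{c},\lambda}$, whereas by construction $V_{g,c,\lambda} = V \neq \tilde{V} = V_{g,\tilde{c},\lambda}$. The infinitude of examples then comes for free from the two-parameter family $(k,t)$: already for a single $k$, letting $t$ range over a small punctured interval produces infinitely many distinct isospectral potentials $\tilde{V}_{k,t}$, hence infinitely many distinct and pairwise non-gauge-equivalent conformal factors $\tilde{c}$. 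The genuinely delicate part of the argument is the simultaneous bookkeeping of all the sign conditions, so that Proposition \ref{q-to-c-1} applies to $V$ and $\tilde{V}$ at once while $\tilde{V}$ remains a legitimate isospectral perturbation.
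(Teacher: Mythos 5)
Your proposal is correct and follows essentially the same route as the paper: fix $V$ and $\eta$ satisfying the sign hypotheses of Proposition \ref{q-to-c-1}, take an isospectral partner $\tilde{V}_{k,t}$ with $|t|$ small via Theorem \ref{NonUniquenessQ3} and Remark \ref{Rem-Iso}, solve the Yamabe-type problem twice to get $c$ and $\tilde{c}$, transfer the equality of DN maps through Proposition \ref{Link-c-to-V}, and rule out gauge equivalence because $V_{g,c,\lambda} \ne V_{g,\tilde{c},\lambda}$ by Lemma \ref{lemmafactor}. Your explicit observation that isospectrality keeps $\lambda$ out of the Dirichlet spectrum of $-\Delta_g + \tilde{V}$ is a small point the paper leaves implicit, but it does not change the argument.
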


\Section{Conclusions and open problems} \label{4}

In this paper, we have highlighted a natural gauge invariance for the anisotropic Calder\'on problem on smooth compact connected Riemannian manifolds, that arises in the case of disjoint data. We refer to Definition \ref{Gauge0} for the definition of the gauge invariance that led to the formulation \textbf{(Q4)} of the anisotropic Calder\'on conjecture. Moreover, we managed to construct some explicit counterexamples to uniqueness for \textbf{(Q4)}, \textit{i.e.} modulo this gauge invariance, within the class $(M,g)$ of cylindrical manifolds with two ends equipped with a warped product metric. This was done in Theorem \ref{NonUniquenessQ3} for Schr\"odinger operators in dimensions $\geq 2$ and in Theorem \ref{NonUniquenessQ4} for the usual anisotropic Calder\'on problem in dimensions $\geq 3$.

The latter counterexamples to uniqueness rely crucially on the fact that the boundary of $(M,g)$ has more than one connected component and that the Dirichlet and Neumann data are measured on distinct connected components of the boundary. This can easily be seen from the expression (\ref{DN2}) of the associated DN map. On the one hand, the expression of the partial DN map when $\Gamma_D, \Gamma_N$ belong to the same connected component of $\partial M$ depends essentially on the Weyl-Titchmarsh operator (\ref{WT}). On the other hand, the expression of the partial DN map when $\Gamma_D, \Gamma_N$ do not belong to the same connected component of $\partial M$ depends essentially on the characteristic operator (\ref{Char}). The latter contains much less information than the former (this fact is encoded in the Borg-Marchenko theorem, see \cite{Be, Bo1, Bo2, ET, FY, GS, KST}) and allows us to construct the above mentioned counterexamples when $\Gamma_D$ and $\Gamma_N$ belong to different connected components of $\partial M$. Finally, we stress the fact that if $\Gamma_D$ and $\Gamma_N$ were disjoint but belonged to the same connected component of $\partial M$, then we would have uniqueness for the the anisotropic Calder\'on problem \textbf{(Q3)} and thus also for \textbf{(Q2)} modulo the gauge invariance (see Remark \ref{WT-vs-Char}). Therefore, we see that the connectedness or non-connectedness of the boundary $\partial M$ plays a critical role in the anisotropic Calder\'on problem with disjoint data. More precisely, we conjecture: \\

\noindent \textbf{(Q5)}: \emph{Let $M$ be a smooth compact connected manifold with smooth boundary $\partial M$ and let $g,\, \tilde{g}$ be smooth Riemannian metrics on $M$. Let $\Gamma_D, \Gamma_N$ be any open sets of $\partial M$ such that $\Gamma_D \cap \Gamma_N = \emptyset$ and suppose that $\lambda \in \R$ does not belong to $\sigma(-\Delta_g) \cup \sigma(-\Delta_{\tilde{g}})$. \\
1. If $\partial M$ is connected and $\Lambda_{g,\Gamma_D, \Gamma_N}(\lambda) = \Lambda_{\tilde{g},\Gamma_D, \Gamma_N}(\lambda)$, then $g = \tilde{g}$ up to the gauge invariances:
 \begin{itemize}
 \item (\ref{Inv-Diff}) in any dimension,
 \item (\ref{Inv-Conf}) if $\dim M = 2$ and $\lambda = 0$,
 \item (\ref{Gauge}) if $\dim M \geq 3$ and $\overline{\Gamma_D \cup \Gamma_N} \ne \partial M$.
\end{itemize}
2. If $\partial M$ is not connected, then there exist metrics $g$ and $\tilde{g}$ not related by one of the above gauge invariances for which $\Lambda_{g,\Gamma_D, \Gamma_N}(\lambda) = \Lambda_{\tilde{g},\Gamma_D, \Gamma_N}(\lambda)$, at least when $\Gamma_D$ and $\Gamma_N$ belong to distinct connected components of the boundary.} \\

\vspace{0.8cm}
\noindent \textbf{Acknowledgements}: The authors would like to warmly thank Yves Dermenjian for suggesting the crucial role of the transformation law of the Laplacian under conformal scaling in the gauge invariance for the Calderon problem with disjoint data and also Gilles Carron for his help in solving the nonlinear PDE of Yamabe type encountered in Sections \ref{1} and \ref{3}.  \\



\begin{thebibliography}{99}

\bibitem{ALP} Astala K., Lassas M., Paivarinta L., \emph{Calder\'on's inverse problem for anisotropic conductivities in the plane}, Comm. Partial Differential Equations $\mathbf{30}$, (2005), 207-224.
\bibitem{Be} Bennewitz C., \emph{A proof of the local Borg-Marchenko Theorem}, Comm. Math. Phys. $\mathbf{211}$, (2001), 131-132.
\bibitem{Bo1} Borg G., \emph{Eine Umkehrung der Sturm-Liouvilleschen Eigenwertaufgabe, Bestimmung der Differentialgleichung durch die Eigenwerte} Acta. Math. $\mathbf{78}$, (1946), 1 - 96.
\bibitem{Bo2}  Borg G.,  \emph{Uniqueness theorems in the spectral theory of $y'' + q y = 0$, Den $11$te Skandinaviske Matematikerkongress, Trondheim 1949, (Proceedings)}, (1952), 276 - 287.
\bibitem{DKN2} Daud\'e T., Kamran N., Nicoleau F., \emph{Non uniqueness results in the anisotropic Calder\'on problem with Dirichlet and Neumann data measured on disjoint sets}, (2015). (preprint arXiv: 1510.06559).
\bibitem{DSFKSU} Dos Santos Ferreira D., Kenig C.E., Salo M., Uhlmann G., \emph{Limiting Carleman weights and anisotropic inverse problems}, Invent. Math $\mathbf{178}$ no. 1, (2009), 119-171.
\bibitem{DSFKLS} Dos Santos Ferreira D., Kurylev Y., Lassas M., Salo M., \emph{The Calder\'on problem in transversally anisotropic geometries}, preprint (2013), To appear in Journal of European Mathematical Society. (preprint arXiv: 1305.1273).
\bibitem{ET} Eckhardt J., Teschl G., \emph{Uniqueness results for Schr\"odinger operators on the line with purely discrete spectra}, Trans. Amer. Math. Soc. $\mathbf{365}$, (2013), 3923-3942.
\bibitem{FY} Freiling G., Yurko V., \emph{Inverse Sturm-Liouville Problems and their Applications}, NOVA Science Publishers, New York, (2001), 305pp.
\bibitem{GS} Gesztesy D., Simon B., \emph{On local Borg-Marchenko uniqueness results}, Comm. Math. Phys. $\mathbf{211}$, (2000), 273-287.
\bibitem{GT} Gilbarg F., Trudinger N. S.,  \emph{Elliptic partial differential equations of second order}, Springer-Verlag (2001).
\bibitem{GSB} Guillarmou C., S\`a Barreto A., \emph{Inverse problems for Einstein manifolds}, Inverse Probl. Imaging $\mathbf{3}$, (2009), 1-15.
\bibitem{GT1} Guillarmou C., Tzou L., \emph{Calder\'on inverse problem with partial data on Riemann surfaces}, Duke Math. J. $\mathbf{158}$, no. 1, (2011), 83-120.
\bibitem{GT2} Guillarmou C., Tzou L., \emph{The Calder\'on inverse problem in two dimensions. Inverse problems and applications: Inside Out. II, 119-166}, Math. Sci. Res. Inst. Publ. $\mathbf{60}$, Cambridge University Press, Cambridge, (2013).
\bibitem{Ho} H\"ormander L., \emph{The Analysis of Linear Partial Differential Operators, IV. Fourier integral operators}, Springer-Verlag, Berlin, (1985).
\bibitem{Is} Isakov V., \emph{On uniqueness in the inverse conductivity problem with local data}, Inverse Probl. Imaging $\mathbf{1}$, (2007), 95-105.
\bibitem{IUY1} Imanuvilov O., Uhlmann G., Yamamoto M., \emph{The Calder\'on problem with partial data in two dimensions}, J. Amer. Math. Soc. $\mathbf{23}$, (2010), 655-691.
\bibitem{IUY2} Imanuvilov O.Y., Uhlmann G., Yamamoto M., \emph{Inverse boundary value problem by measuring Dirichlet data and Neumann data on disjoint sets}, Inverse problems $\mathbf{27}$, no. 8, (2011), 085007, 26p.
\bibitem{KY} Kang H., Yun K., \emph{Boundary determination of conductivities and Riemannian metrics via local Dirichlet-to-Neumann operators}, SIAM J. Math. Anal. $\mathbf{34}$, no. 3, (2003), 719-735.
\bibitem{KKL} Katchalov A., Kurylev Y., Lassas M., \emph{Inverse boundary spectral problems}, Monographs and Surveys in Pure and Applied Mathematics $\mathbf{123}$, Chapman Hall/ CRC, (2001).
\bibitem{KKLM} Katchalov A., Kurylev Y., Lassas M., Mandache N., \emph{Equivalence of time-domain inverse problems and boundary spectral problem}, Inverse problems $\mathbf{20}$, (2004), 419-436.
\bibitem{KS1} Kenig C., Salo M, \emph{The Calder\'on problem with partial data on manifolds and applications}, Analysis \& PDE $\mathbf{6}$, no. 8, (2013), 2003-2048.
\bibitem{KS2} Kenig C., Salo M, \emph{Recent progress in the Calder\'on problem with partial data}, Contemp. Math. $\mathbf{615}$, (2014), 193-222.
\bibitem{KSU} Kenig C., Sj\"ostrand J., Uhlmann G., \emph{The Calder\'on problem with partial data}, Ann. of Maths $\mathbf{165}$, (2007), 567-591.
\bibitem{KST} Kostenko A., Sakhnovich A., Teschl G., \emph{Weyl-Titchmarsh theory for Schr\"odinger operators with strongly singular potentials},  Int. Math. Res. Not. $\mathbf{2012}$, (2012), 1699-1747.
\bibitem{KLO} Kurylev Y., Oksanen L., Lassas M., \emph{Hyperbolic inverse problem with data on disjoint sets}, (2016), preprint arXiv:1602.03626
\bibitem{LO1} Lassas M., Oksanen L., \emph{An inverse problem for a wave equation with sources and observations on disjoint sets}, Inverse Problems $\mathbf{26}$ no. 8, (2010), 085012, 19p.
\bibitem{LO2} Lassas M., Oksanen L., \emph{Inverse problem for the wave equation with Dirichlet data and Neumann data on disjoint sets}, Duke Math. J. $\mathbf{163}$ no.6, (2014), 1071-1103.
\bibitem{LaTU} Lassas M., Taylor G., Uhlmann G., \emph{The Dirichlet-to-Neumann map for complete Riemannian manifolds with boundary}, Comm. Anal. Geom. $\mathbf{11}$, (2003), 207-221.
\bibitem{LaU} Lassas M., Uhlmann G., \emph{On determining a Riemannian manifold from the Dirichlet-to-Neumann map}, Ann. Scient. Ec. Norm. Sup., $4^e$ s\'erie, $\mathbf{34}$, (2001), 771-787.
\bibitem{LeU} Lee J.M., Uhlmann G., \emph{Determining anisotropic real-analytic conductivities by boundary measuremements}, Comm. Pure Appli. Math. $\mathbf{42}$ no. 8, (1989), 1097-1112.
\bibitem{Lev} Levin B. Y., \emph{Lectures on entire functions}, Translations of Mathematical Monograph,  $\mathbf{150}$, American Mathematical Society (1996).
\bibitem{Li} Lionheart W. R. B., \emph{Conformal uniqueness results in anisotropic electrical impedance imaging}, Inverse Problems $\mathbf{13}$, (1997), 125-134.
\bibitem{PT} P\"oschel J., Trubowitz E., \emph{ Inverse Spectral Theory}, Academic Press, Boston, 1987.
\bibitem{Rak} Rakesh, \emph{Characterization of transmission data for Webster's Horn equation}, Inverse Problems $\mathbf{16}$, (2000), L9-L24.
\bibitem{Sa} Salo M., \emph{The Calder\'on problem on Riemannian manifolds, Inverse problems and applications: inside out. II}, Math. Sci. Res. Inst. Publ., $\mathbf{60}$, Cambridge Univ. Press, Cambridge, (2013), 167-247.
\bibitem{Sat} Sattinger D.H., \emph{Topics in Stability and Bifurcation Theory}, Lecture notes in maths $\mathbf{309}$, Springer-Verlag (1973).
\bibitem{Ta} Tataru D., \emph{Unique continuation for pde's}, The IMA Volumes in Mathematics and its Applications $\mathbf{137}$, (2003), 239-255.
\bibitem{Ta1} Taylor M., \emph{Partial Differential Equations, I. Basic theory}, Applied Mathematical Sciences 115, Springer-Verlag New York, (2011).
\bibitem{Ta2} Taylor M., \emph{Partial Differential Equations, III. Nonlinear equations}, Applied Mathematical Sciences 117, Springer-Verlag New York, (2011).
\bibitem{U1} Uhlmann G., \emph{Electrical impedance tomography and Calder\'on's problem}, Inverse Problems $\mathbf{25}$, (2009), 123011, 39p.



\end{thebibliography}
\end{document}